\documentclass[11pt, oneside]{article}   	
\usepackage{amsmath, amsthm, amsfonts, amssymb}
\usepackage{graphicx}
\usepackage[colorlinks,citecolor=blue,urlcolor=blue]{hyperref}
\setlength{\textheight}{230mm}
\setlength{\textwidth}{142mm}
\setlength{\topmargin}{-10mm}
\setlength{\oddsidemargin}{10mm}
\setlength{\evensidemargin}{10mm}

\newcommand\E{{\mathcal E}}
\newcommand\Ex{{\mathbb E}}

\newcommand\Prob{{\mathbb P}}


\newcommand\cP{{\mathcal P}}

\newcommand\N{{\mathbb N}}

\newcommand\R{{\mathbb R}}
\newcommand\C{{\mathbb C}}



\newcommand\one{{\bf 1}}

\newcommand\norm[1]{\|#1\|}

\newcommand\bra[1]{\langle #1 \rangle}

\newtheorem{theorem}{Theorem}[section]
\newtheorem{corollary}[theorem]{Corollary}
\newtheorem{lemma}[theorem]{Lemma}
\newtheorem{proposition}[theorem]{Proposition}

\theoremstyle{definition}

\theoremstyle{remark}
\newtheorem{remark}[theorem]{Remark}


%
\title{Poisson statistics for beta ensembles on the real line at high temperature}
\author{Fumihiko Nakano\footnote{Department of Mathematics, Gakushuin University, Tokyo, Japan. \newline Email: fumihiko@math.gakushuin.ac.jp  \newline
\emph{Current address:} Mathematical Institute, Tohoku University, Sendai,  Japan
\newline Email: fumihiko.nakano.e4@tohoku.ac.jp} \and Khanh Duy Trinh\footnote{Global Center for Science and Engineering, Waseda University, Tokyo, Japan. \newline
Email: trinh@aoni.waseda.jp}}

\begin{document}
\maketitle

\begin{abstract}
This paper studies beta ensembles on the real line in a high temperature regime, that is, the regime where $\beta N \to const \in (0, \infty)$, with $N$ the system size and $\beta$ the inverse temperature. For the global behavior, the convergence to the equilibrium measure is a consequence of a recent result on large deviation principle. This paper focuses on the local behavior and shows that the local statistics around any fixed reference energy converges weakly to a homogeneous Poisson point process. 

\medskip

	\noindent{\bf Keywords: } beta ensembles ; high temperature ; large deviation principle ; Poisson statistics
		
\medskip
	
	\noindent{\bf AMS Subject Classification: } Primary 60F05; Secondary  60B20,  60G55
\end{abstract}

\section{Introduction}
Let $V \colon \R \to \R$ be a measurable function. Let 
\begin{align*}
	H_N = H_N(\lambda_1, \dots, \lambda_N) &= \frac1N \sum_{i = 1}^N V(\lambda_i) - \frac1{N^2}\sum_{i \neq j} \log|\lambda_j - \lambda_i| 
\end{align*}
be the energy of the configuration $(\lambda_1, \lambda_2, \dots, \lambda_N) \in \R^N$ under the external potential $V$ and the log-interaction. Beta ensembles are then defined as ensembles of $N$ particles with the joint probability density function propositional to
\[
	 e^{-\frac{\beta N^2}{2} H_N} = |\Delta (\lambda)|^\beta e^{-\frac{\beta N}{2} \sum_{i = 1}^N V(\lambda_i)}.
\]
Here $\Delta(\lambda) = \prod_{i < j}(\lambda_j - \lambda_i)$ is the Vandermonde determinant. The parameter $\beta>0$ is regarded as the inverse temperature of the system.

When all $\{\lambda_i\}$ are distinct, the energy functional can be expressed as 
\[
	H_N=\int V(x) dL_N(x) - \iint_{x\neq y} \log|x-y|dL_N(x) dL_N(y),
\]
where $L_N = N^{-1}\sum_{i = 1}^N \delta_{\lambda_i}$ denotes the empirical distribution with $\delta_\lambda$ the Dirac measure. Then under some mild conditions on $V$, the energy functional
\begin{align*}
	\E (\mu) &= \iint \left(\frac12 V(x) + \frac12 V(y) - \log|x-y| \right) d\mu(x) d\mu(y) \\
	&= \int V(x) d\mu(x) -\iint \log|x-y|d\mu(x) d\mu(y)
\end{align*}
which is well-defined on the set $\cP(\R)$ of probability measures on $\R$ has a unique minimizer $\mu_V$ of compact support, that is,
\[
	\E(\mu_V)  = \inf_{\mu \in \cP(\R)} \E(\mu). 
\]
The minimizer $\mu_V$ is an equilibrium of the system in the sense that for fixed $\beta > 0$, as $N \to \infty$, the empirical distribution $L_N$ converges weakly to $\mu_V$, almost surely. This result, together with Gaussian fluctuations around the limit can be found in \cite{Johansson-1998}.

We would like to study the case where the parameter $\beta$ varies as $N$ tends to infinity. For the global behavior, that is, the limiting behavior of the sequence of the empirical measures $\{L_N\}$, following facts are known \cite{Chafai-2014, G-Zelada-2019, Liu-Wu-2019}: as $\beta N \to 2c \in (0, \infty]$, the sequence of $\{L_N\}$ satisfies a large deviation principle (LDP), and thus converges weakly to a limiting measure $\mu_c$, almost surely. If $c=\infty$, the limiting measure coincides with that for the case of fixed $\beta$, that is, $\mu_\infty = \mu_V$. Gaussian fluctuations around the limit in case of varying parameter $\beta$ have been studied for some specific models: Gaussian beta ensembles ($V(x) = x^2/2$) \cite{Trinh-2019}, beta Laguerre ensembles \cite{Trinh-Trinh-2019} and circular beta ensembles \cite{Hardy-Lambert-2019}. In any case, the global behavior is governed by $\lim_{N \to \infty} \beta N$. In contrast, for the local or edge scaling limit, the limits are $\rm{Sine}_\beta$ point processes and $TW_\beta$ distributions depending on $\beta$ \cite{BEY-2012, BEY-2014, BEY-2014E, Ramirez-Rider-Virag-2011, Valko-Virag-2009}.

Gaussian beta ensembles realized as eigenvalues of a tridiagonal random matrix model \cite{DE02} are among the most studied models. Let us only mention some of their results in a high temperature regime, the regime where $\beta N \to 2c \in (0, \infty)$. For the global behavior, the limiting measures $\mu_c$ were explicitly calculated in \cite{Allez12, DS15}. They are Gaussian like probability measures of full-support which are (up to a scaling) probability measures of associated Hermite polynomials. Under suitable scaling, they provide an interpolation between the semi-circle distribution ($\mu_\infty$ in this case) and the standard Gaussian distribution (the probability measure with density proportional to $e^{-V(x)} = e^{-x^2/2}$ associated with the potential $V$). The almost sure convergence of the sequence of empirical distributions and Gaussian fluctuations around the limit were established in \cite{Trinh-2019} by using the random matrix model. Next, for the local behavior, it was shown in \cite{Peche-2015, Nakano-Trinh-2018} that the local statistics around any fixed point converges to a homogeneous Poisson point process on $\R$. Furthermore, the edge behavior (of a scaled model like~\eqref{VbE} below when $\beta N \log N \to 0$) was studied in \cite{Pakzad-2018}. The main purpose of this paper is to show the universality of the local Poisson behavior for generic potential $V$. We remark that the results here are extended to more general cases and the edge limit is established in the regime $\beta N \to 2c \in (0, \infty)$ in \cite{Lambert-2019}.

From now on, let us consider the following beta ensembles 
\begin{equation}\label{VbE}
	(\lambda_1, \lambda_2, \dots, \lambda_N) \propto \frac{1}{Z_{\beta, N}} |\Delta(\lambda)|^\beta e^{-\sum_{i = 1}^N V(\lambda_i)},
\end{equation}
in the regime where $\beta N \to 2c \in (0, \infty)$. Here $Z_{\beta, N}$ is the normalizing constant. In this regime, to match the ensembles at the beginning, one can replace $V$ by $cV$. A LDP has been recently studied for more general models  \cite{G-Zelada-2019,Liu-Wu-2019}. It turns out that when the potential $V$ is bounded below and the following moments condition is satisfied
\begin{equation}\label{condition-V}
	\int |x|^k e^{-V(x)} dx < \infty, \quad k = 0,1,2,\dots,
\end{equation}
then the sequence of empirical distributions $\{L_N\}$ satisfies a LDP with the good rate function $I_c (\mu) = \E_c(\mu) - \inf_{\nu} \E_c(\nu)$, where the functional $\E_c$ is defined for absolutely continuous probability measure $\mu(dx) = \rho(x) dx$,
\[
	\E_c(\mu) = \int \log(\rho(x)) \rho(x) dx + \int V(x) \rho(x) dx  - c \int \log|x - y| \rho(x) \rho(y) dx dy.
\]
(See Section~2 for a more precise definition.) When the potential $V$ is assumed to be lower semi-continuous, we only need weaker conditions \cite{G-Zelada-2019}.
Such functional has appeared in heuristic saddle point arguments as in \cite{Akemann-Byun-2019, Allez12, Spohn-2019}. Note that the functional $\E_c$ has a unique minimizer, denoted by $\mu_c$ (or $\rho_c$ for the density), because of the strict convexity. Then the LDP implies the almost sure convergence of empirical distributions to the equilibrium measure $\mu_c$. Namely, the following results hold.

\begin{theorem}[{\cite{G-Zelada-2019, Liu-Wu-2019}}]\label{thm:LLN-intro}
	Assume that the function $V$ is bounded below and satisfies the moments condition \eqref{condition-V}. Then in the regime where $\beta N \to 2c \in (0, \infty)$, the following hold.
	\begin{itemize}
		\item[\rm (i)] The sequence of empirical distributions $\{L_N\}$ satisfies a LDP with the good rate function $I_c$. 
		\item[\rm (ii)] The function $I_c$ is strictly convex and has a unique minimizer $\mu_c$ which is absolutely continuous.
		\item[\rm (iii)] The sequence $\{L_N\}$ converges weakly to $\mu_c$, almost surely.
	\end{itemize}
\end{theorem}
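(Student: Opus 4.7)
The plan is to derive (i) from Sanov's theorem applied to a suitably chosen i.i.d.\ reference together with a Varadhan-type tilting argument, then to deduce (ii) from strict convexity of the individual summands of $\E_c$, and finally to obtain (iii) from (i) by a Borel--Cantelli argument.

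For (i), introduce $\mu_0(dx) = Z_0^{-1} e^{-V(x)} dx$, which is a probability measure by the moment condition \eqref{condition-V}. The joint density \eqref{VbE} can then be written as
\[
\frac{Z_0^N}{Z_{\beta,N}}\,\exp\!\Bigl(\tfrac{\beta}{2}\sum_{i\ne j}\log|\lambda_i-\lambda_j|\Bigr)\prod_{i=1}^N \mu_0(d\lambda_i).
\]
Because $\beta N\to 2c$, the exponent equals $N\cdot c\iint_{x\ne y}\log|x-y|\, dL_N(x)\, dL_N(y) + o(N)$. Sanov's theorem yields an LDP for $\{L_N\}$ under $\mu_0^{\otimes N}$ at speed $N$ with rate $H(\mu\,\|\,\mu_0) = \int \log(d\mu/d\mu_0)\, d\mu$, and formally combining it with Varadhan's lemma produces the LDP for \eqref{VbE} with rate $H(\mu\,\|\,\mu_0) - c\iint\log|x-y|\, d\mu\, d\mu$, which, after absorbing $\log Z_0$ and expanding the entropy, is exactly $\E_c(\mu) - \inf\E_c$.

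The main obstacle is that $\mu\mapsto \iint\log|x-y|\, d\mu\, d\mu$ is neither continuous nor bounded on $\cP(\R)$, so Varadhan's lemma does not apply out of the box. I would handle this by truncation: replace $\log$ by its two-sided cutoff $\log_M(x-y) := \max(-M,\min(\log|x-y|,M))$, which becomes weakly continuous after a further mollification, apply Varadhan to the truncated tilt, and let $M\to\infty$. This requires (a) super-exponential bounds under $\mu_0^{\otimes N}$ on the event of anomalously close pairs, so that the truncated and untruncated exponents agree to $o(N)$, together with the upper-tail control on $\max_i|\lambda_i|$ supplied by \eqref{condition-V} for the far-pair cutoff; and (b) $\Gamma$-convergence of the truncated rate functions to $I_c$ on $\{\mu:\E_c(\mu)<\infty\}$. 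Exponential tightness follows from the same moment condition.

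For (ii), strict convexity of $\E_c$ decomposes cleanly: $\mu\mapsto\int\rho\log\rho\, dx$ is strictly convex on absolutely continuous measures, $\mu\mapsto\int V\, d\mu$ is linear, and $\mu\mapsto -\iint\log|x-y|\, d\mu\, d\mu$ is convex via the Fourier representation $-\log|x-y| = \int_0^\infty t^{-1}(1-\cos(t(x-y)))\, dt + \text{const}$, which expresses it as an integral involving $|\hat\mu(t)|^2\ge 0$. Existence of a minimizer comes from lower semicontinuity and coercivity of $\E_c$ (the entropy dominates the negative part of the interaction once the $V$-term controls a suitable moment), and any minimizer must be absolutely continuous, since otherwise the entropy is $+\infty$. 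Finally, (iii) is immediate from (i): with a unique minimizer and a good rate function, the LDP gives $\Prob(d(L_N,\mu_c)>\varepsilon)\le e^{-N\delta(\varepsilon)}$ for some $\delta(\varepsilon)>0$, which is summable in $N$, and Borel--Cantelli upgrades weak convergence in probability to almost sure weak convergence.
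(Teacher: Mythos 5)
This theorem is cited from García-Zelada and Liu--Wu rather than proved in the paper; Section~2 only recalls the framework. Specifically, the paper imports the LDP of (i) as a special case of a general result in Liu--Wu, then obtains (ii) exactly as you do — strict convexity of $H(\cdot\,|\,\alpha)$ on sublevel sets (citing Dembo--Zeitouni \S6.2) plus convexity of the logarithmic energy (citing Saff--Totik, Lemma~1.8), hence a unique absolutely continuous minimizer — and treats (iii) as an immediate consequence of the LDP with a unique zero of the good rate function, just as your Borel--Cantelli argument does. So for (ii) and (iii) you match the paper's reasoning; for (i) you are attempting to reprove what the paper takes as a black box.

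Your outline for (i) — Sanov for the i.i.d.\ base measure $\mu_0 \propto e^{-V}dx$ at speed $N$, then a Laplace/Varadhan tilt by the pair interaction — is indeed the route taken in the cited references, and you correctly identify both the right scaling (with $\beta N\to 2c$ the exponent is $cN\iint_{x\ne y}\log|x-y|\,dL_N\,dL_N + o(N)$) and the central obstruction (the logarithmic kernel is neither bounded nor continuous). But the part you compress into ``truncation + super-exponential closeness bounds + $\Gamma$-convergence'' is precisely where all the work in García-Zelada and Liu--Wu lives, and as written it is a plan rather than a proof: the near-diagonal bound must beat speed $N$ uniformly in $\beta N\le\kappa$, the far-field cutoff must be matched to the growth of $V$, and the interchange of $M\to\infty$ with the LDP limits needs a genuine approximation argument on both the upper and lower bound sides. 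None of this is wrong in spirit, but it is not yet an argument. One small slip: your stated identity $-\log|x-y| = \int_0^\infty t^{-1}(1-\cos(t(x-y)))\,dt + \text{const}$ does not converge at $t=\infty$; a correct Frullani-type form such as $\log|u| = \int_0^\infty t^{-1}(\cos t - \cos(tu))\,dt$ is needed, though the underlying positivity argument for convexity of the interaction energy is the same one used in Saff--Totik. Finally, for existence of the minimizer you invoke coercivity and lower semicontinuity; the paper (and the LDP framework) gets this for free from goodness of the rate function, which is the cleaner route.
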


This paper focuses on studying the limiting behavior of the local statistics around a fixed reference energy $E \in \R$,
\[
	\xi_N(E) = \sum_{i = 1}^N \delta_{N(\lambda_i - E)}.
\]
We show that the local statistics $\xi_N(E)$ converges weakly to a homogeneous Poisson point process on $\R$. That local behavior in the case of Gaussian beta ensembles was proved in \cite{Peche-2015} and in \cite{Nakano-Trinh-2018} by different methods. However, the two approaches relied more or less on both the joint density and the tridiagonal matrix model. This paper refines ideas developed in the two papers to extend the result to the case of generic potential $V$. Our main result is stated as follows.

\begin{theorem}\label{main-result}
	Assume that the potential $V$ is continuous and that 
	\[
		\lim_{x \to \pm \infty} \frac{V(x)}{\log (1+x^2)} = \infty.
	\]
Then the following hold. 
\begin{itemize}

\item[\rm(i)]
The continuous version of $\rho_c$ satisfies the relation 
\[
\rho_c(x) = \frac{1}{Z_{c} }e^{-V(x) + 2 c  \int \log|x - y| \rho_c(y) dy}, \quad \text{for all $x \in \R$,}
\]
where $Z_c$ is a constant.

\item[\rm(ii)]
For fixed $E \in \R$, the local statistics $\xi_N(E)$
converges weakly to a homogeneous Poisson point process on $\R$ with density $\rho_c(E) >0$.
\end{itemize}
\end{theorem}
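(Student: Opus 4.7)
Since Theorem~\ref{thm:LLN-intro}(ii) yields a unique absolutely continuous minimizer $\mu_c = \rho_c\,dx$ of the strictly convex functional $\E_c$, a standard first-variation argument applies. Perturbing by $\epsilon(\nu - \mu_c)$ with any admissible competitor $\nu$ and demanding stationarity at $\epsilon = 0$ gives the Euler--Lagrange identity
$$ \log \rho_c(x) + V(x) - 2c\, U_c(x) = \kappa \quad \text{a.e.,} $$
for some constant $\kappa$, where $U_c(x) := \int \log|x - y|\,\rho_c(y)\,dy$. Under the superlogarithmic growth of $V$ and integrability of $\rho_c$, $U_c$ is finite and continuous on all of $\R$, so $Z_c^{-1} e^{-V(x) + 2c U_c(x)}$ with $Z_c = e^\kappa$ is a continuous representative of $\rho_c$ valid at every point.

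\paragraph{Part (ii): reduction to correlation functions.} We use the standard criterion that a sequence of simple point processes with locally bounded intensity converges weakly to the Poisson process of intensity $\alpha$ iff for every bounded Borel set $A \subset \R$ and each integer $k \geq 1$,
$$ \Ex\bigl[\xi_N(E)(A)^{[k]}\bigr] = \int_{A^k} \frac{R_{N,k}\!\bigl(E + x_1/N,\ldots,E + x_k/N\bigr)}{N^k}\, dx_1 \cdots dx_k \to (\alpha |A|)^k, $$
where $x^{[k]} := x(x-1)\cdots(x-k+1)$ and $R_{N,k}$ is the $k$-point correlation function. Separating the $k$ tagged variables from the remaining $N - k$ in the joint density \eqref{VbE} produces the exact identity
$$ R_{N,k}(\vec y) = \frac{N!}{(N-k)!} \cdot \frac{Z_{\beta, N-k}}{Z_{\beta, N}} \cdot \prod_{i < j \leq k}|y_i - y_j|^\beta \cdot e^{-\sum_{i \leq k} V(y_i)} \cdot \mathcal{I}_N(\vec y), $$
with $\mathcal{I}_N(\vec y) = \Ex^{(N-k)}\bigl[\exp\bigl(\beta \sum_{i \leq k,\, j \leq N-k} \log|y_i - \lambda_j|\bigr)\bigr]$, the expectation taken under the $(N-k)$-particle beta ensemble with the same potential. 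On the scale $y_i = E + x_i/N$, the Vandermonde prefactor $\prod|y_i - y_j|^\beta$, the combinatorial factor $N!/((N-k)! N^k)$, and the potential factor $e^{-\sum V(y_i)}/e^{-kV(E)}$ tend to $1$ uniformly on $A^k$, since $\beta \log N \to 0$ and $V$ is continuous.

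\paragraph{Core estimate and assembly.} The heart of the argument is the uniform limit
$$ \mathcal{I}_N\!\bigl(E + x_1/N,\ldots,E + x_k/N\bigr) \to \exp\bigl(2ck\, U_c(E)\bigr) \quad \text{for } (x_i) \in A^k. $$
Heuristically, $\beta \sum_j \log|y_i - \lambda_j| = \bigl(\beta(N-k)\bigr) \int \log|y_i - s|\, dL_{N-k}(s)$ with $\beta(N-k) \to 2c$ and $L_{N-k} \to \mu_c$ almost surely by Theorem~\ref{thm:LLN-intro}(iii). To make this rigorous one splits $\log|y - s|$ into (a) a far-field part with subpolynomial growth, controlled via LDP tail estimates from the growth of $V$, and (b) a short-range logarithmic singularity, which is integrable against $\rho_c$ and whose $\beta$-exponential is made uniformly integrable by exploiting the smallness $\beta \sim 2c/N$ (via a Hölder / moment bound on $\Ex[\mathcal{I}_N^{1+\delta}]$). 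Specializing the resulting limit to $k = 1$ and comparing with the pointwise identity $R_{N,1}(y)/N \to \rho_c(y)$ (which follows from Theorem~\ref{thm:LLN-intro}(iii) combined with part (i)) forces $Z_{\beta, N-1}/Z_{\beta, N} \to Z_c^{-1}$; iterating yields $Z_{\beta, N-k}/Z_{\beta, N} \to Z_c^{-k}$. Assembling all factors,
$$ \frac{R_{N,k}\!\bigl(E + x_1/N,\ldots,E + x_k/N\bigr)}{N^k} \to \frac{e^{-kV(E) + 2ck U_c(E)}}{Z_c^k} = \rho_c(E)^k $$
uniformly on $A^k$. Integrating over $A^k$ produces the Poisson factorial moments $(\rho_c(E)|A|)^k$, which delivers the claimed weak convergence.

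\paragraph{Main obstacle.} The principal technical difficulty is the rigorous control of $\mathcal{I}_N$ when some $\lambda_j$ happens to lie in the $O(1/N)$-neighborhood of a designated point $y_i$, where $\log|y_i - \lambda_j|$ blows up. Theorem~\ref{thm:LLN-intro} provides macroscopic information about $L_{N-k}$ but says nothing directly about microscopic clustering on the $1/N$ scale. Bridging this gap for a generic potential $V$---without access to the tridiagonal matrix model that underlies the Gaussian case---is precisely where the refinement of the arguments of \cite{Peche-2015, Nakano-Trinh-2018} is required, and where the bulk of the technical work of the paper lies.
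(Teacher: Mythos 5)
For Part (i), you take a genuinely different route from the paper. You invoke a first-variation / Euler--Lagrange argument on the strictly convex functional $H_c$, whereas the paper instead computes the pointwise limit of the first marginal $\rho_N(x)$ directly from the exact identity $\rho_N(x)=\frac{Z_{\beta,N-1}}{Z_{\beta,N}}e^{-V(x)}\Ex_{\beta,N-1}\big[\prod_i|x-\lambda_i|^\beta\big]$ and matches it against the weak limit $\rho_c$. Your EL approach is standard in spirit, but as stated it has two unaddressed holes. First, the Euler--Lagrange \emph{equality} $\log\rho_c+V-2cU_c=\kappa$ only holds a.e.\ on $\{\rho_c>0\}$; on the complement you merely get an inequality. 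You need to argue separately (e.g.\ via the infinite negative slope of $t\mapsto t\log t$ at $t=0$) that $\rho_c>0$ a.e., and then that the support is all of $\R$, before the formula can be asserted pointwise on $\R$. Second, continuity of $U_c(x)=\int\log|x-y|\rho_c(y)\,dy$ requires $\rho_c$ to be bounded near every $x$ together with finite moments; you state this as obvious, but the paper actually has to prove boundedness of $\rho_c$ (as a byproduct of the uniform bound $\rho_N\le D$ in Proposition~\ref{prop:Wegner}). What the paper's route buys is that positivity and the continuous representative come for free from the explicit limiting formula, avoiding any variational regularity issues.

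For Part (ii), your route is the same as the paper's: reduce weak convergence to Poisson to the two conditions on the scaled correlation functions (pointwise convergence to $\rho_c(E)^k$ plus a uniform $\theta^k$ bound), peel off the Vandermonde, potential, and combinatorial prefactors, and reduce everything to the limit of $\mathcal{I}_N$. One small correction: the convergence $|\Delta(E+x_i/N)|^\beta\to 1$ is \emph{not} uniform on $A^k$ (near the diagonal $|x_i-x_j|^\beta$ is small for $\beta>0$); the correct logic is pointwise convergence on $A^k\setminus\{\text{diagonals}\}$ plus the $\theta^k$ domination of Lemma~\ref{lem:k-bound} and dominated convergence. The real issue, however, is that your ``core estimate and assembly'' paragraph is the entire technical content of the section and you essentially concede in the ``main obstacle'' paragraph that you have not carried it out. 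The paper resolves it by a concrete chain of estimates rather than an LDP/H\"older sketch: (a) a lower bound on $Z_{\beta,N}$ by Jensen (Lemma~\ref{lem:partition-lb}); (b) an upper bound on $Z_N^{(\varphi)}$ via the de~la Pe\~na decoupling inequality and the exponential $U$-statistic bound from Liu--Wu (Lemma~\ref{lem:exp-varphi}), giving $\Ex\big[\prod|x-\lambda_i|^\beta\big]\le M(1+x^2)^{\kappa/2}$ (Lemma~\ref{lem:moment}); (c) the uniform density bound $\rho_N\le D$ (Proposition~\ref{prop:Wegner}); (d) using (c) to control the short-range singular integral and obtain $\frac1N\sum\log|E+x/N-\lambda_i|\to\int\log|E-y|\rho_c(y)\,dy$ in probability (Lemmas~\ref{lem:log}, \ref{lem:log-E}); and (e) the $\theta^k$ bound of Lemma~\ref{lem:k-bound} for uniform integrability. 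These estimates are the paper's contribution; without them your proposal correctly identifies the target statement but does not prove it.
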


\begin{remark}
	By a heuristic saddle point argument, it was also shown in  \cite{Akemann-Byun-2019} that the limiting measure is a minimizer of the energy functional $H_c$. Then by using functional derivative, an equation to characterize the minimizer $\mu_c$, with density $\rho_c$, was derived 
\[
	 \int \frac{V'(x) \rho_c(x)}{x - z} dx + c S_c^2(z) + S_c'(z) = 0,\quad z \in \C \setminus \R,
\]
where $S_c(z) = \int \frac{\rho_c(x)dx}{ x - z}$ is the Stieltjes transform of $\rho_c$. Note that for Gaussian beta ensembles ($V'(x) = x$), the integral in the above equation is equal to $1 + z S_c(z)$, and hence we can solve $S_c(z)$ and then get an explicit formula for $\rho_c$.

\end{remark}

The paper is organized as follows. The next section is devoted to introduce a LDP. Section~3 studies properties of the limiting measure $\mu_c$ and proves some estimates needed for Section 4 in which the Poisson statistics is derived.

\section{Large deviation principle}
\subsection{Assumption on the potential $V$}
Throughout this paper, we assume that the potential $V \colon \R \to \R$ is measurable, bounded below and 
\[
	\lim_{x \to \pm \infty} \frac{V(x)}{\log(1+x^2)} = \infty.
\]
Under that assumption, it is clear that the moments condition~\eqref{condition-V} is satisfied.
Let $\alpha$ be the probability measure with density $\alpha(x) = Z^{-1}e^{-V(x)}$, where $Z = \int_\R  e^{-V(x)} dx$. Then all moments of $\alpha$ are finite.

\subsection{Large deviation principle}
We give here a quick review on the results stated in Theorem~\ref{thm:LLN-intro}.
Let $\cP(\R)$ be the set of probability measures on $\R$, endowed with the weak topology. For $\mu, \nu \in \cP(\R)$, the entropy of $\mu$ relative to $\nu$ (also called the Kullback--Leibler divergence) is defined by 
\begin{equation}
	H(\mu | \nu) = \begin{cases}
		\int_\R \frac{d\mu}{d\nu} \log \frac{d\mu}{d\nu} d\nu, &\text{if $\mu \ll \nu,$} \\
		+\infty, &\text{otherwise.}
	\end{cases}
\end{equation}
Here $\mu \ll \nu$ means that the measure $\mu$ is absolutely continuous with respect to the measure $\nu$, and $\frac{d\mu}{d\nu}$ denotes the Radon--Nikodym derivative. It is known that  $H(\mu | \alpha)$ is non-negative and strictly convex on the sublevel set $\{\mu \in \cP(\R) : H(\mu | \alpha) \le L\}$, for any $L>0$ (see \cite[\S 6.2]{Dembo-Zeitouni-book}).



For $c > 0$, let 
\begin{equation}
	H_c(\mu) := 
	\begin{cases}
		H(\mu | \alpha) - c \iint \log|x-y| d\mu(x) d\mu(y), &\text{if $H(\mu|\alpha) < \infty$,} \\
		+\infty, &\text{otherwise.}
	\end{cases}
\end{equation}
The functional $H_c$ is well-defined, that is, $H_c(\mu) \in (-\infty, \infty)$, if $H(\mu|\alpha) < \infty$ (see Remark~2.3 in \cite{Liu-Wu-2019}). Also in case $H(\mu|\alpha) < \infty$, by the assumption on $V(x)$, we can use the Donsker--Varadhan variational formula to see that
\[
\int \frac12 V(x) d\mu(x)  \le H(\mu | \alpha) + \log \int e^{\frac12 V(x)} Z^{-1} e^{-V(x)} dx < \infty.
\] 
Thus, once the functional $H(\mu | \alpha)$ is finite, it can be expressed as 
\begin{equation*}
	H(\mu | \alpha) =  \int \rho(x) \log (\rho(x)) dx + \int V(x) \rho(x) dx + \log Z,
\end{equation*}
where $\rho$ is the density of $\mu$. In this case, the functional $H_c(\mu)$ is also finite and $H_c(\mu)$ can be written as a sum of finite integrals 
\begin{align}
	H_c(\mu) = H_c(\rho) &=  \int \rho(x) \log (\rho(x)) dx + \int V(x) \rho(x) dx \nonumber\\
	&\quad - c\iint \log|x - y|\rho(x) \rho(y) dx dy  + \log Z.
\end{align}
The functional $H_c(\mu)$ is strictly convex on any sublevel set, because $H(\mu | \alpha)$ is strictly convex, and the log-interaction term is convex (cf.~\cite[Lemma~1.8]{Saff-Totik-book}). Consequently, the minimizer of $H_c$ is unique and is absolutely continuous. We denote the minimizer and its density by $\mu_c$ and $\rho_c$, respectively.


As a particular case of a general result in \cite{Liu-Wu-2019}, the sequence of empirical distributions $\{L_N\}$ of the beta ensemble~\eqref{VbE} 
in the regime where $ \beta  N\to 2 c \in (0, \infty)$ satisfies a LDP on $\cP(\R)$ with the good rate function $I_c$, where
\[
	I_c(\mu) = H_c(\mu) - \inf_{\nu \in \cP(\R)} H_c(\nu).
\]
It is worth noting that if the potential $V$ is assumed to be lower semi-continuous, then a LDP for $\{L_N\}$ holds under weaker conditions than the moments condition (see \cite[\S 4.3]{G-Zelada-2019}). Now the LDP implies the law of large numbers, that is, the sequence $\{L_N\}$ converges weakly to $\mu_c$, almost surely.

\begin{remark}
Consider the following beta ensembles 
	\[
		|\Delta (\lambda)|^\beta e^{-(1+\frac{\beta N}{2}) \sum_{i = 1}^N V(\lambda_i)}.
	\]
Then the limiting measure $\tilde \mu_c$ in the regime where $\beta N \to 2c \in [0, \infty]$ is the minimizer of the following functional 
\begin{align*}
	J_c(\rho) &= \int \rho(x) \log (\rho(x)) dx + (1+c)\int V(x) \rho(x) dx - c\iint \log|x - y|\rho(x) \rho(y) dx dy \\
	&=  H(\rho | \alpha) + c \E(\rho), \quad c \in [0, \infty),\\
	J_\infty &= \E(\mu).
\end{align*}
Under the assumption that $H(\mu_\infty | \alpha) < \infty$, we can show that $\{\tilde \mu_c\}$ is an interpolation between $\tilde \mu_0 = \alpha$ and $\tilde \mu_\infty = \mu_V$, that is, $\{\tilde \rho_c\}$ converges weakly to $\alpha$ (resp.~$\mu_V$) as $c \to 0$ (resp. $c \to \infty$).
\end{remark}



\section{Equilibrium measures}
The approach introduced in the previous section shows that in the regime where $\beta N \to 2c \in (0, \infty)$, the empirical distribution $L_N$ converges weakly to the limiting measure $\mu_c$ which is the minimizer of the energy functional $H_c$, almost surely. 

In this section, we are going to derive an equation characterizing $\rho_c$ in a rigorous way.
Let us first explain some main ideas. Let $\rho_{\beta, N}(x)$ (or $\rho_N(x)$ for short) be the first marginal of the beta ensemble~\eqref{VbE}
\begin{align}
	\rho_N(x) &= \frac{1}{Z_{\beta, N}} e^{-V(x)}\idotsint \left( \prod_{i = 1}^{N-1} |x - \lambda_i|^\beta \right)|\Delta(\lambda)|^\beta e^{-\sum_{i = 1}^{N-1} V(\lambda_i)} d\lambda_1 \cdots d\lambda_{N - 1} \nonumber\\
	&= \frac{Z_{\beta, N - 1}}{Z_{\beta,N}} e^{-V(x)} \Ex_{\beta, N-1} \left[ e^{\beta \sum_{j = 1}^{N-1} \log |x - \lambda_j|} \right].  \label{1-point}
\end{align}
Here $ \Ex_{\beta, N-1}[\cdot] $ denotes the expectation with respect to the beta ensemble~\eqref{VbE} of $(N-1)$ particles. Since the measure $\rho_N(x)dx$ is the mean measure of the random probability measure $L_N$, that is, for integrable function $f \colon \R \to \R$, 
\[
	\Ex[\bra{L_N, f}] = \int f(x) \rho_N(x) dx,
\]
it follows that it also converges weakly to $\rho_c(x)dx$ in the considering regime. Here the notation $\bra{\mu, f}$ denotes the integral $\int f d\mu$, for a measure $\mu$ and an integrable function $f$. Note that by definition, the ratio ${Z_{\beta, N}}/{Z_{\beta, N - 1}} $ can be written as 
\begin{equation}\label{ratio}
	\frac{Z_{\beta, N}}{Z_{\beta, N - 1}}  = \int \Ex_{\beta, N-1} \left[ e^{\beta \sum_{j = 1}^{N-1} \log |x - \lambda_j|} \right] e^{-V(x)} dx.
\end{equation}
To characterize the limiting measure $\rho_c(x) dx$, we are going to study the limit of the expectation inside the above integrand.

It is clear that the almost sure convergence in Theorem~\ref{thm:LLN-intro} still holds when $N$ is replaced by $(N-1)$. And thus, although the function $y \mapsto \log|x - y|$ is neither bounded nor continuous, one may expect that almost surely,
\begin{equation}\label{log-function}
	\beta \sum_{j = 1}^{N-1} \log |x - \lambda_j| = \beta (N-1) \int \log|x - y| dL_{N-1} (y)  \to 2 c  \int \log|x-y| \rho_c(y)dy.
\end{equation}
We will show that the above convergence holds in probability instead, by a truncation method with the help of some estimates. 

Lemma~\ref{lem:moment} provides a crucial estimate that 
\begin{align*}
	\Ex_{\beta, N-1} \left[ e^{\beta \sum_{j = 1}^{N-1} \log |x - \lambda_j|} \right] \le M (1 + x^2)^{\frac \kappa 2},
\end{align*}
whenever $\beta (N-1)  \le \kappa$,
where $M$ is a constant depending only on $\kappa$. To show this, we use the decoupling inequality in \cite{Decoupling} as an important tool. A lower bound for the ratio ${Z_{\beta, N}}/{Z_{\beta, N - 1}} $ can be easily deduced by using Jensen's inequality. Consequently, the density $\rho_N(x)$ is uniformly bounded by 
\begin{equation}\label{first-point}
	\rho_N(x) \le \Lambda  (1 + x^2)^{\frac \kappa2} e^{-V(x)}, \quad (\beta N \le \kappa),
\end{equation}
for a constant $\Lambda$ depending on $\kappa$. This is where we need the moments assumption.
That estimate enables us to show the convergence of $\{\bra{L_N, f}\}$ for continuous function of polynomial growth, and to handle the singularity of the log function at $y = x$.

Once the convergence~\eqref{log-function} in probability is proved, by the continuous mapping theorem, it follows that 
\[
	e^{\beta \sum_{j = 1}^{N-1} \log |x - \lambda_j|} \to e^{2 c  \int \log|x-y|  \rho_c(y)dy} \quad \text{in probability.}
\] 
A generalization of the estimate~\eqref{first-point} in Lemma~\ref{lem:k-bound} implies that the above sequence is uniformly integrable. Thus, the convergence of the expectations follows. Then by letting $N \to \infty$ in the equations~\eqref{ratio} and~\eqref{1-point}, we get that,  
\begin{align*}
	&\frac{Z_{\beta, N}}{Z_{\beta, N - 1}} \to \int  e^{2 c  \int \log|x-y|  \rho_c(y)dy} e^{-V(x)} dx =: Z_c,\\
	& \rho_N(x) \to \frac{1}{Z_c} e^{-V(x) + 2c \int \log|x-y| \rho_c(y) dy}.
\end{align*}
Recall that $\{\rho_N(x)dx\}$ converges weakly to $\rho_c(x) dx$. From those, the equation characterizing $\rho_c$ in Theorem~\ref{main-result}(i) is derived.

\subsection{Some initial estimates}

The following general estimate may have its own interest and will be used to bound the expectation $\Ex[e^{\beta \sum_{i=1}^N \log|x - \lambda_i| }]$.

\begin{proposition}\label{lem:exp}
Assume that $\varphi \colon \R \to \R$ is a measurable function satisfying 
\[
	\int_\R |x|^k e^{\varphi(x) - V(x)} dx < \infty, \quad k = 0,1,2, \dots.
\]
Then for $\beta N \le \kappa$,
\[
	\Ex \left[e^{\frac1N \sum_{i = 1}^N \varphi(\lambda_i)} \right]  \le \left(\Ex \left[e^{ \sum_{i = 1}^N \varphi(\lambda_i)} \right] \right)^{1/N} \le M_{\varphi, \kappa},
\]
where $M_{\varphi, \kappa}$ is a constant depending on $\varphi$ and $\kappa$.
\end{proposition}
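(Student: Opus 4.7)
The first inequality follows from Jensen's inequality applied to the concave map $t \mapsto t^{1/N}$ on $(0,\infty)$: writing $e^{\frac{1}{N}\sum_i \varphi(\lambda_i)} = \bigl(e^{\sum_i \varphi(\lambda_i)}\bigr)^{1/N}$, one immediately obtains $\Ex\bigl[e^{\frac{1}{N}\sum_i\varphi(\lambda_i)}\bigr] \le \bigl(\Ex[e^{\sum_i\varphi(\lambda_i)}]\bigr)^{1/N}$. The substance of the statement is the second inequality, which, after writing out the joint density of the beta ensemble, is equivalent to
\[
\frac{\int_{\R^N} |\Delta(\lambda)|^\beta \prod_{i=1}^N e^{\varphi(\lambda_i) - V(\lambda_i)}\, d\lambda}{\int_{\R^N}|\Delta(\lambda)|^\beta \prod_{i=1}^N e^{-V(\lambda_i)}\, d\lambda} \le M_{\varphi,\kappa}^N.
\]
The plan is to bound numerator and denominator separately.

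For the numerator, I would decouple the $\lambda_i$'s via the elementary inequality $|x-y|\le \sqrt{(1+x^2)(1+y^2)}$. Since each index appears in $N-1$ pairs, this gives $|\Delta(\lambda)|^\beta \le \prod_i (1+\lambda_i^2)^{\beta(N-1)/2}$, so the integral factorizes over the coordinates. Because $\beta(N-1)/2 \le \kappa/2$ and $(1+x^2)^t$ is monotone in $t\ge 0$, the numerator is at most $\bigl(\int_\R(1+x^2)^{\kappa/2} e^{\varphi(x)-V(x)}dx\bigr)^N$, which is finite by the moments hypothesis on $\varphi - V$ and is exactly of the form $M^N$.

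For the denominator, the natural move is Jensen's inequality against the normalized product measure $d\alpha^N$, where $\alpha(dx)=Z^{-1}e^{-V(x)}dx$: this yields
\[
Z_{\beta,N} = Z^N \,\Ex_{\alpha^N}\bigl[|\Delta(\lambda)|^\beta\bigr] \ge Z^N \exp\!\left(\beta \binom{N}{2} C\right), \qquad C := \iint \log|x-y|\,d\alpha(x)\,d\alpha(y).
\]
Since $V$ is bounded below, $\alpha$ has bounded density and hence the logarithmic singularity is integrable in two variables; the moments hypothesis controls the positive tails of $\log|x-y|$; hence $C\in\R$. Combining this with the numerator estimate yields the claimed bound. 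The main obstacle, and also the point where the high temperature regime enters crucially, is precisely this last step: at fixed $\beta$ the Jensen estimate would produce a factor $e^{\beta\binom{N}{2}C}$ that is super-exponential in $N$ and would destroy any $N$-th root bound, whereas under $\beta N \le \kappa$ one has $\beta\binom{N}{2} \le \kappa(N-1)/2$, turning that factor into one of the form $c_0 A^N$ and so producing a constant $M_{\varphi,\kappa}$ independent of $N$ after division.
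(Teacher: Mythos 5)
Your proof is correct, and for the key upper bound on $Z_N^{(\varphi)} := \int_{\R^N}|\Delta(\lambda)|^\beta\prod_{i=1}^N e^{(\varphi-V)(\lambda_i)}\,d\lambda$ it takes a genuinely more elementary route than the paper. The paper (Lemma~\ref{lem:exp-varphi}) rewrites $Z_N^{(\varphi)}$ as an expectation over i.i.d.\ variables $X_i$ with law $\propto e^{\varphi-V}$, dominates $\log|x-y|$ by $\Phi(x,y)=\log|x-y|\vee 0$, and then invokes the de la Pe\~na decoupling inequality plus a lemma of Liu--Wu to reduce to a single-pair moment $\Ex\big[e^{4\kappa\Phi(X_1,Y_1)}\big]$, which requires moments of order $4\kappa$ of that law. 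You instead apply $|\lambda_i-\lambda_j|^2\le(1+\lambda_i^2)(1+\lambda_j^2)$ directly to the Vandermonde to get $|\Delta(\lambda)|^\beta\le\prod_i(1+\lambda_i^2)^{\beta(N-1)/2}\le\prod_i(1+\lambda_i^2)^{\kappa/2}$, so the $N$-fold integral factorizes on the spot and you need only moments of order about $\kappa$. This is in fact the same elementary inequality the paper later uses in Lemma~\ref{lem:moment} (there applied to $|x-\lambda_i|$); applying it at the Vandermonde level, as you do, makes the decoupling machinery unnecessary for this proposition. Your lower bound for $Z_{\beta,N}$ via Jensen's inequality against $\alpha^{\otimes N}$, and the observation that $\beta\binom{N}{2}\le\kappa(N-1)/2$ keeps that factor of the form $c_0 A^N$, coincide with the paper's Lemma~\ref{lem:partition-lb}. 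One small attribution slip: finiteness of $\iint\log|x-y|\,d\alpha(x)\,d\alpha(y)$ comes from the paper's standing assumptions on $V$ (bounded below, super-logarithmic growth), not from the moments hypothesis on $\varphi-V$ stated in the proposition; this does not affect the argument.
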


\begin{proof}
Since  
\begin{align*}
	\Ex \left[e^{ \sum_{i = 1}^N \varphi(\lambda_i)} \right] = \frac{1}{Z_{\beta, N}} \idotsint |\Delta(\lambda)|^\beta \prod_{i = 1}^N e^{(\varphi - V) (\lambda_i)} d\lambda_i =: \frac{Z_N^{(\varphi)}}{Z_{\beta, N}},
\end{align*}
the desired result follows from a lower bound for $Z_{\beta, N}$ and an upper bound for $Z_N^{(\varphi)}$ which are proved in 
Lemma~\ref{lem:partition-lb} and Lemma~\ref{lem:exp-varphi}, respectively.
\end{proof}

For the normalizing constant $Z_{\beta, N}$,  also called a partition function 
\[
	Z_{\beta, N} =   \idotsint |\Delta(\lambda)|^\beta e^{-\sum_{i = 1}^N V(\lambda_i)} d\lambda_1 \cdots d\lambda_N,
\]
a lower bound is an easy consequence of Jensen's inequality.

\begin{lemma}\label{lem:partition-lb}
For $\kappa > 0$, there  is a constant $C_\kappa \in \R$ such that for $\beta N \le \kappa$,
	\[
		\frac{\log Z_{\beta, N} }{N}  \ge C_\kappa .
	\]
\end{lemma}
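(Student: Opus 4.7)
The plan is to rewrite the partition function as an expectation with respect to the product measure $\alpha^{\otimes N}$---where $\alpha(dx) = Z^{-1}e^{-V(x)}\,dx$ is the reference probability measure introduced in Section~2.1---and then apply Jensen's inequality to the concave function $\log$. Explicitly,
\[
Z_{\beta, N} \;=\; Z^N\,\Ex_\alpha\!\left[|\Delta(\lambda)|^\beta\right],
\]
where $\lambda_1, \dots, \lambda_N$ are i.i.d.\ with law $\alpha$ under $\Ex_\alpha$. Concavity of $\log$ yields
\[
\log \Ex_\alpha\!\left[|\Delta(\lambda)|^\beta\right] \;\ge\; \beta\sum_{i<j}\iint \log|x-y|\,d\alpha(x)d\alpha(y) \;=\; \beta \binom{N}{2} K,
\]
where $K := \iint \log|x-y|\,d\alpha(x)d\alpha(y)$. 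Dividing by $N$ and using $\beta(N-1) \le \beta N \le \kappa$,
\[
\frac{\log Z_{\beta,N}}{N} \;\ge\; \log Z + \frac{\beta(N-1)}{2}\,K \;\ge\; \log Z + \frac{\kappa}{2}\min(K,0),
\]
so the constant $C_\kappa := \log Z + \tfrac{\kappa}{2}\min(K,0)$ will do, provided $K$ is a finite real number.

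The only point that needs verification is therefore the finiteness of $K$. For the positive part, the elementary inequality $\log t \le t$ for $t\ge 1$ gives $(\log|x-y|)_+ \le |x|+|y|$, which is $\alpha\otimes\alpha$-integrable because $\alpha$ has a finite first moment (itself a consequence of the moments condition~\eqref{condition-V}). For the negative part, the assumption that $V$ is bounded below implies that $\alpha$ has a bounded density $\alpha(x) \le M := e^{-\inf V}/Z$, and hence
\[
\iint (-\log|x-y|)\,\one_{|x-y|<1}\,d\alpha(x)d\alpha(y) \;\le\; M\int d\alpha(x)\int_{|u|<1}(-\log|u|)\,du \;=\; 2M,
\]
while the contribution of $\{|x-y|\ge 1\}$ is non-negative. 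Thus $K\in\R$ and the argument is complete.

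No step in this outline is a genuine obstacle; in keeping with the remark preceding the lemma, it really is an easy consequence of Jensen's inequality, with only the routine integrability check for $K$ requiring any writing at all.
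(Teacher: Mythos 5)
Your proof is correct and takes essentially the same route as the paper: express $Z_{\beta,N}$ as $Z^N$ times an expectation under $\alpha^{\otimes N}$, apply Jensen's inequality to $\log$, and bound using $\beta N \le \kappa$. The paper leaves the finiteness of $K = \iint \log|x-y|\,d\alpha(x)d\alpha(y)$ implicit, whereas you verify it explicitly (and correctly) via the bounded density of $\alpha$ and the finite first moment; that is a minor but welcome addition rather than a genuine divergence.
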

\begin{proof}
	Let us express $Z_{\beta, N}$ in terms of the integral with respect to the probability measure $\alpha$,
\[
	Z_{\beta, N} = Z^N \idotsint e^{\frac\beta 2\sum_{i \neq j} \log|\lambda_j - \lambda_i| } d\alpha(\lambda_1) \cdots d\alpha(\lambda_N).
\]
Then by Jensen's inequality, we obtain that
	\begin{align*}
		\log Z_{\beta,N} &\ge N \log Z + \frac{\beta N(N-1)}{2} \iint \log|x - y| d\alpha(x) d\alpha(y) \\
		&\ge N\left(\log Z + \begin{cases}
			 0, &\text{if } \ell_\alpha = \iint \log|x - y| d\alpha(x) d\alpha(y) \ge 0\\
			\frac{\kappa \ell_\alpha}2, &\text{otherwise}
		\end{cases} \right).
	\end{align*}
The proof is complete.
\end{proof}

Next, we study $Z_N^{(\varphi)}$.

\begin{lemma}\label{lem:exp-varphi}
For $\beta N \le \kappa$,
\[
	\frac{\log Z_N^{(\varphi)}}N \le C_{\varphi, \kappa},
\]
where $C_{\varphi, \kappa}$ is a constant depending on $\varphi$ and $\kappa$. 

\end{lemma}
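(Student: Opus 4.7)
My plan is to separate the Vandermonde factor by means of a crude but uniform pointwise bound, which reduces the multidimensional integral to a power of a one-dimensional integral that is finite by the hypothesis on $\varphi$.

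The first step is the elementary inequality $|x-y| \le (1+|x|)(1+|y|)$ valid for all real $x,y$, from which
\[
	|\Delta(\lambda)|^\beta = \prod_{i<j}|\lambda_i - \lambda_j|^\beta \le \prod_{i<j} \bigl((1+|\lambda_i|)(1+|\lambda_j|)\bigr)^\beta = \prod_{i=1}^N (1+|\lambda_i|)^{\beta(N-1)},
\]
because each index $i$ appears in exactly $N-1$ of the pairs. Plugging this into the definition of $Z_N^{(\varphi)}$ and applying Fubini yields
\[
	Z_N^{(\varphi)} \le \left(\int_\R (1+|x|)^{\beta(N-1)}\, e^{\varphi(x)-V(x)}\,dx\right)^{\!N}.
\]

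The second step is to bound this integral uniformly in $N$ under the constraint $\beta N \le \kappa$. Since $\beta(N-1) \le \kappa$, for any integer $m \ge \kappa$ we have $(1+|x|)^{\beta(N-1)} \le 1 + (1+|x|)^m$, and expanding $(1+|x|)^m = \sum_{k=0}^m \binom{m}{k}|x|^k$ we see that
\[
	\int_\R (1+|x|)^{\beta(N-1)} e^{\varphi(x)-V(x)}\,dx \;\le\; \int_\R \bigl(1+(1+|x|)^m\bigr) e^{\varphi(x)-V(x)}\,dx \;=:\; A_{\varphi,\kappa},
\]
and $A_{\varphi,\kappa}<\infty$ by the hypothesis $\int |x|^k e^{\varphi - V}\,dx < \infty$ for all $k$. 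Taking logarithms and dividing by $N$ gives $\log Z_N^{(\varphi)}/N \le \log A_{\varphi,\kappa} =: C_{\varphi,\kappa}$, which is the desired bound.

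No step here is genuinely difficult; the only point requiring a touch of care is the passage from a fractional exponent $\beta(N-1)$ to an integer exponent so that the moment assumption on $e^{\varphi-V}$ applies directly, which is handled by the trivial inequality $(1+|x|)^a \le 1+(1+|x|)^{\lceil a\rceil}$. Combined with Lemma~\ref{lem:partition-lb} this completes the proof of Proposition~\ref{lem:exp}.
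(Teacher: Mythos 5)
Your proof is correct and takes a genuinely different, more elementary route than the paper. The paper handles the interaction term $\frac{\beta}{2}\sum_{i\neq j}\log|X_i-X_j|$ by passing through two auxiliary results: de la Pe\~{n}a's decoupling inequality (to replace $X_j$ by an independent copy $Y_j$) followed by the exponential inequality of Liu--Wu (to linearize the double sum), and the resulting constant involves $\Ex[|X_1-Y_1|^{4\kappa}]$. Your observation that $|x-y| \le (1+|x|)(1+|y|)$ makes the Vandermonde factorize pointwise as $|\Delta(\lambda)|^\beta \le \prod_{i=1}^N (1+|\lambda_i|)^{\beta(N-1)}$, after which the $N$-fold integral separates by Tonelli and one only needs moments up to order $\lceil\kappa\rceil$, with no decoupling machinery at all. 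In fact both arguments ultimately rest on the same elementary bound $\log|x-y| \le \log(1+|x|)+\log(1+|y|)$ --- the paper applies it after decoupling to control $\Ex[e^{4\kappa\Phi(X_1,Y_1)}]$, while you apply it directly to the integrand --- so your version is a clean simplification that buys a shorter proof and a slightly sharper constant, at no cost for this lemma.
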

To prove this lemma, we need the following inequalities which are special cases of Theorem~2.1 in \cite{Decoupling} (decoupling inequality) and Lemma~3.4 in \cite{Liu-Wu-2019}, respectively.

\begin{lemma}
Let $\{X_i\}_{i = 1}^N$ be an i.i.d.~(independent identically distributed) sequence of random variables on $\R$, and let $\{Y_i\}_{i = 1}^N$ be its independent copy. Let $\psi$ be a convex increasing function on $[0, \infty)$, and $\Phi \colon \R^2 \to \R$ be a symmetric function such that $\Ex[|\Phi(X_1, X_2)|] < \infty$. Then the following inequalities hold
\begin{equation}
	\Ex \bigg[ \psi \bigg( \bigg| \sum_{i \neq j} \Phi(X_i, X_j) \bigg|\bigg) \bigg] \le \Ex \bigg[ \psi \bigg( 8 \bigg| \sum_{i \neq j} \Phi(X_i, Y_j) \bigg|\bigg) \bigg],
\end{equation}
\begin{equation}
	\log \Ex \bigg[ \exp\bigg( \frac{1}{N(N-1)} \sum_{i \neq j} \Phi(X_i, Y_j) \bigg) \bigg] \le (N-1) \log \Ex \bigg[ \exp\bigg( \frac{1}{N-1} \Phi(X_1, Y_1) \bigg)\bigg].
\end{equation}
\end{lemma}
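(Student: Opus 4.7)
My plan is to establish each of the two inequalities by a separate tensorization argument that exploits the product structure of the underlying i.i.d.\ variables.

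For the decoupling inequality~(1), I would follow the classical randomized-bipartition approach of de la Pe\~{n}a. Introduce auxiliary Bernoulli$(1/2)$ selectors $\epsilon_1,\dots,\epsilon_N$ independent of the $X_i$, and set $A=\{i:\epsilon_i=1\}$, $B=\{j:\epsilon_j=0\}$. Since $\Ex_{\epsilon}[\epsilon_i(1-\epsilon_j)]=1/4$ for $i\neq j$, one has
\[
\sum_{i\neq j}\Phi(X_i,X_j) \;=\; 4\,\Ex_{\epsilon}\Big[\sum_{i\in A,\,j\in B}\Phi(X_i,X_j)\Big].
\]
Jensen's inequality applied to the convex function $\psi\circ|\cdot|$ pulls $\Ex_{\epsilon}$ outside $\psi$. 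Conditional on $\epsilon$, the sub-collections $(X_i)_{i\in A}$ and $(X_j)_{j\in B}$ are independent, so the latter may be replaced by an independent copy $(Y_j)_{j\in B}$ without changing the joint law; this is the decoupling step. To pass from the restricted decoupled sum $\sum_{i\in A,\,j\in B}\Phi(X_i,Y_j)$ to the full sum $\sum_{i\neq j}\Phi(X_i,Y_j)$, one combines the $\epsilon\leftrightarrow 1-\epsilon$ symmetry with a further application of Jensen, absorbing an additional factor of $2$ and producing the overall constant $8$. Carefully tracking this last constant is the main bookkeeping obstacle.

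For the tensorization inequality~(2), I would use a $1$-factorization of the off-diagonal bipartite edges. For $k=1,\dots,N-1$ the cyclic shift $i\mapsto i+k\pmod{N}$ is a derangement, and the $N-1$ corresponding perfect matchings partition $\{(i,j):i\neq j\}$. Setting $A_k:=\frac{1}{N}\sum_{i=1}^{N}\Phi(X_i,Y_{i+k})$, we get
\[
\frac{1}{N(N-1)}\sum_{i\neq j}\Phi(X_i,Y_j) \;=\; \frac{1}{N-1}\sum_{k=1}^{N-1}A_k.
\]
Jensen's inequality applied to $\exp$ yields $\exp\!\bigl(\frac{1}{N-1}\sum_k A_k\bigr)\le \frac{1}{N-1}\sum_k \exp(A_k)$; taking expectation and invoking rotational symmetry of the i.i.d.\ $Y_j$ collapses the $k$-average to $\Ex[\exp(A_1)]$. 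Within the matching $k=1$ the $N$ pairs $(X_i,Y_{i+1})$ use pairwise disjoint coordinates, so the summands $\Phi(X_i,Y_{i+1})$ are i.i.d.\ and independence gives
\[
\Ex[\exp(A_1)] \;=\; \bigl(\Ex[\exp(\Phi(X_1,Y_1)/N)]\bigr)^{N}.
\]
A last Jensen step, using convexity of $x\mapsto x^{N/(N-1)}$, gives $(\Ex[\exp(\Phi/N)])^{N}\le(\Ex[\exp(\Phi/(N-1))])^{N-1}$, and taking logarithms delivers the claimed inequality.

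Once the cyclic matching decomposition is identified, inequality~(2) reduces to classical convexity tools; the harder part of the lemma is (1), where the precise constant $8$ must be extracted from the repeated symmetrization steps.
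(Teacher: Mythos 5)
The paper does not actually prove this lemma; it invokes inequality (1) as a special case of Theorem~2.1 in de~la~Pe\~{n}a (1992) and inequality~(2) as a special case of Lemma~3.4 in Liu--Wu (2019), and moves on. So your self-contained proof attempt is necessarily a different route than the paper takes, which is worth assessing on its own terms.

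Your argument for inequality~(2) is complete and correct. The observation that the cyclic shifts $i\mapsto i+k \pmod N$ for $k=1,\dots,N-1$ give a $1$-factorization of the off-diagonal index pairs, combined with convexity of $\exp$, exchangeability of the $Y_j$'s, independence within each matching, and the H\"older/Jensen step $\Ex[e^{Z/N}] \le (\Ex[e^{Z/(N-1)}])^{(N-1)/N}$, delivers exactly the stated bound with nothing hidden. This is a clean elementary proof and is essentially the standard argument for such decoupled tensorization inequalities.

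For inequality~(1), however, there is a genuine gap at the final step. After the Bernoulli-selector identity $\sum_{i\neq j}\Phi(X_i,X_j) = 4\,\Ex_{\epsilon}\big[\sum_{i\in A,\,j\in B}\Phi(X_i,X_j)\big]$, Jensen, and the decoupling replacement $(X_j)_{j\in B}\to (Y_j)_{j\in B}$, you are left with $\Ex_\epsilon\Ex\big[\psi\big(4\big|\sum_{i\in A,\,j\in B}\Phi(X_i,Y_j)\big|\big)\big]$, and you need to dominate this by $\Ex\big[\psi\big(8\big|\sum_{i\neq j}\Phi(X_i,Y_j)\big|\big)\big]$. You cannot pass from the restricted bipartite sum to the full decoupled sum by ``$\epsilon\leftrightarrow 1-\epsilon$ symmetry plus one more Jensen'': Jensen works in the wrong direction (it would require pulling $\Ex_\epsilon$ \emph{inside} $\psi$), and the full sum $\sum_{i\neq j}\Phi(X_i,Y_j)$ also contains same-block contributions ($i,j\in A$ or $i,j\in B$) that are simply absent from the restricted sum. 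In de~la~Pe\~{n}a's actual proof this step is a conditional centering/symmetrization argument: one conditions on one of the two collections, observes that certain cross terms have conditional mean zero, and uses the fact that for a convex $\psi$ and a conditionally mean-zero $W$ independent of $Z$ one has $\Ex\psi(|Z|)\le \Ex\psi(|Z+W|)$. That mechanism, not a naive factor of two, is what produces the constant $8$. As written, your sketch does not supply this, so the proof of (1) is incomplete until this step is filled in along the lines of the original de~la~Pe\~{n}a argument (or you simply cite the theorem, as the paper does).
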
 
%
%
\begin{proof}[Proof of Lemma~{\rm\ref{lem:exp-varphi}}]
	Let $Z_\varphi$ be the normalizing constant of the probability measure 
	\[
		d\mu(x) = \frac1{ Z_\varphi} e^{\varphi(x) - V(x)}dx.
	\]
Let $\Phi(x, y) = \log|x - y| \vee 0 = \max\{\log|x - y|, 0\}$. Then it is clear that
	\[
		Z_N^{(\varphi)} =(Z_\varphi)^N \Ex\bigg[\exp \bigg(\frac \beta 2 \sum_{i \neq j} \log|X_i - X_j| \bigg)\bigg] \le  (Z_\varphi)^N \Ex\bigg[\exp \bigg(\frac \beta 2 \sum_{i \neq j} \Phi(X_i, X_j) \bigg)\bigg],
	\]
where $\{X_i\}_{i = 1}^N$ is an i.i.d.~sequence of random variables with common distribution $\mu$. Let $\{Y_i\}_{i = 1}^N$ be an independent copy of $\{X_i\}_{i = 1}^N$. Then using the two inequalities (with $\psi = \exp$ in the decoupling inequality) quoted in the above lemma consecutively, we deduce that 
\begin{align*}
	\Ex\bigg[\exp \bigg(\frac \beta 2 \sum_{i \neq j} \Phi(X_i, X_j) \bigg)\bigg] &{\le} \Ex\bigg[\exp \bigg(4 \beta \sum_{i \neq j} \Phi(X_i, Y_j) \bigg)\bigg] \\
	& \le \exp \left( (N - 1) \log \Ex \left[e^{4 \beta N \Phi(X_1, Y_1)} \right]\right) \\
	&\le \exp \left( N  \log \Ex \left[e^{4 \kappa \Phi(X_1, Y_1)} \right]\right)\\
	&\le \exp \left( N  \log \Ex \left[ 1 + |X_1-Y_1|^{4 \kappa} \right]\right).
\end{align*}
Finally, the upper bound is obtained by taking the logarithm
\begin{align*}
	\frac{\log Z_N^{(\varphi)} }{N} & \le  \log Z_\varphi  + \log \left(1 + \Ex[|X_1 - Y_1|^{4 \kappa}] \right) =: C_{\varphi, \kappa},
\end{align*}
which completes the proof.
\end{proof}

Using Proposition~\ref{lem:exp}, we now bound the expectation 
\[
	\Ex \bigg[\prod_{i = 1}^N |x - \lambda_i|^\beta \bigg] = \Ex \left[e^{\beta \sum_{i=1}^N \log|x - \lambda_i| } \right],
\]
an important step to bound the density $\rho_N(x)$.

\begin{lemma}\label{lem:moment}
For $\kappa > 0$, there is a constant $M = M (\kappa)$ such that for $\beta N \le \kappa$,
	\begin{equation}\label{upper-bound-for-Z}
		\Ex \bigg[\prod_{i = 1}^N |x - \lambda_i|^\beta \bigg] \le M (1 + x^2)^{\frac{\kappa}{2}}.
	\end{equation}
\end{lemma}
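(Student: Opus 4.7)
The plan is to apply Proposition~\ref{lem:exp} with $\varphi=\varphi_x$, where $\varphi_x(y):=\beta N\log|x-y|$. With this choice, $\frac{1}{N}\sum_{i=1}^N\varphi_x(\lambda_i)=\beta\sum_i\log|x-\lambda_i|$, so the proposition yields
\[
\Ex\Big[\prod_{i=1}^N|x-\lambda_i|^\beta\Big]=\Ex\Big[e^{\frac{1}{N}\sum_i\varphi_x(\lambda_i)}\Big]\le M_{\varphi_x,\kappa}.
\]
The integrability hypothesis $\int|y|^k|x-y|^{\beta N}e^{-V(y)}dy<\infty$ follows from $|x-y|^{\beta N}\le 1+|x-y|^\kappa$ together with the moments condition on $\alpha$, so the task reduces to pinning down the $x$-dependence of the constant $M_{\varphi_x,\kappa}$.

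Tracing through the proofs of Lemmas~\ref{lem:partition-lb} and~\ref{lem:exp-varphi} (rather than just using the bare statement of Proposition~\ref{lem:exp}) gives the more explicit form
\[
M_{\varphi_x,\kappa}\le e^{-C_\kappa}Z_{\varphi_x}\big(1+\Ex[|X_1-Y_1|^{4\kappa}]\big),
\]
where $Z_{\varphi_x}=\int|x-y|^{\beta N}e^{-V(y)}dy$ and $X_1,Y_1$ are i.i.d.\ with density $Z_{\varphi_x}^{-1}|x-y|^{\beta N}e^{-V(y)}$. It then suffices to establish two estimates: (a)~$Z_{\varphi_x}\le C(1+x^2)^{\kappa/2}$, and (b)~$\Ex[|X_1-Y_1|^{4\kappa}]$ is bounded uniformly in $x\in\R$ and in $\beta N\in[0,\kappa]$.

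Estimate (a) is immediate from $|x-y|^{\beta N}\le 1+2^\kappa(|x|^\kappa+|y|^\kappa)$ combined with $|x|^\kappa\le(1+x^2)^{\kappa/2}$ and the moments of $\alpha$. For estimate (b), which is the main technical step, the triangle inequality reduces matters to bounding $\Ex[|X_1|^{4\kappa}]$. I would split the numerator $\int|u|^{4\kappa}|x-u|^{\beta N}e^{-V(u)}du$ at $|u|\le|x|$ versus $|u|>|x|$: on the first region $|x-u|^{\beta N}\le 2^\kappa|x|^{\beta N}$, while on the second $|x-u|^{\beta N}\le 2^\kappa|u|^{\beta N}\le 2^\kappa(1+|u|^\kappa)$, yielding a bound of $C|x|^{\beta N}+C'$. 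A matching lower bound $Z_{\varphi_x}\ge c\max(1,|x|^{\beta N})$ follows from the direct estimate $Z_{\varphi_x}\ge 2^{-\kappa}|x|^{\beta N}\int_{|u|\le|x|/2}e^{-V(u)}du$ for $|x|\ge 2$, and from continuity and positivity of $(x,\beta N)\mapsto Z_{\varphi_x}$ on the compact set $\{|x|\le 2\}\times[0,\kappa]$ for $|x|\le 2$.

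The hard part is (b), and in particular the small-$\beta N$ regime where the polynomial factor $|x|^{\beta N}$ degenerates to $1$; the compactness-based lower bound for bounded $x$ is exactly what rescues that degeneration and secures uniformity. Once both estimates are in hand, combining them gives $M_{\varphi_x,\kappa}\le M(1+x^2)^{\kappa/2}$, as required.
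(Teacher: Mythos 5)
Your argument is correct, but it takes a substantially longer and more technical route than the paper's. You feed the $x$-dependent function $\varphi_x(y)=\beta N\log|x-y|$ directly into Proposition~\ref{lem:exp}, which forces you to reopen the proofs of Lemma~\ref{lem:partition-lb} and Lemma~\ref{lem:exp-varphi}, extract the explicit bound $M_{\varphi_x,\kappa}\le e^{-C_\kappa}Z_{\varphi_x}\bigl(1+\Ex[|X_1-Y_1|^{4\kappa}]\bigr)$, and then establish two further uniform estimates: the polynomial-growth bound (a) on $Z_{\varphi_x}$ and the uniform moment bound (b) for the $x$-dependent, $\beta N$-dependent tilted measure $|x-u|^{\beta N}e^{-V(u)}du/Z_{\varphi_x}$. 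The delicate part of (b) is exactly what you flag: for bounded $|x|$ and small $\beta N$ the factor $|x|^{\beta N}$ loses all strength, and you rescue uniformity with a compactness/continuity argument on $\{|x|\le 2\}\times[0,\kappa]$. This all works (modulo minor sloppiness in that $2^\kappa|x|^{\beta N}$ should really be $2^\kappa\max(1,|x|)^{\beta N}$ on the region $|u|\le|x|$, which is harmless because the matching lower bound for $Z_{\varphi_x}$ has the same $\max(1,|x|^{\beta N})$ form), but it is genuinely more work.

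The paper sidesteps all of this with one elementary observation: $|x-\lambda|^2\le(1+x^2)(1+\lambda^2)$ implies
\[
\beta\sum_{i=1}^N\log|x-\lambda_i|\le\frac{\kappa}{2}\log(1+x^2)+\frac{1}{N}\sum_{i=1}^N\frac{\kappa}{2}\log(1+\lambda_i^2),
\]
which factors the $(1+x^2)^{\kappa/2}$ out of the expectation immediately and leaves an expectation that one bounds by a single application of Proposition~\ref{lem:exp} with the \emph{$x$-independent} function $\varphi(\lambda)=\frac{\kappa}{2}\log(1+\lambda^2)$. No constant-tracing, no tilted measures, no compactness argument. Your approach is the natural first attempt --- apply the proposition with $\varphi=\beta N\log|x-\cdot|$ --- and it does go through, but the paper's initial decoupling of $x$ from $\lambda$ is what makes the proof short; it's worth noticing that one elementary inequality can replace an entire page of uniformity estimates.
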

\begin{proof}
It follows from the inequality 
\[
	|x - \lambda|^2 \le (1 + x^2) (1+ \lambda^2),
\]
that
\begin{align*}
	 \beta  \sum_{i = 1}^N \log|x - \lambda_i|  &\le \frac{ \beta N }{2}\log(1+x^2) + \frac1N \sum_{i = 1}^N  \frac{\beta N}2 \log(1 + \lambda_i^2) \\
	 &\le \frac{ \kappa }{2}\log(1+x^2) + \frac1N \sum_{i = 1}^N  \frac\kappa2 \log(1 + \lambda_i^2).
\end{align*}
Therefore, 
\[
	\Ex\bigg[ \prod_{i = 1}^N |x - \lambda_i|^\beta\bigg] \le \Ex \left[e^{\frac1N \sum_{i = 1}^N \frac  \kappa 2 \log(1 + \lambda_i^2)} \right] \times (1+x^2)^{\frac \kappa 2}.
\]
The expectation on the right hand side is bounded by a constant $M=M(\kappa)$ by using Proposition~\ref{lem:exp} for the function $\varphi (\lambda) = \frac\kappa 2 \log (1 + \lambda^2 )$. The proof is complete.
\end{proof}

\begin{proposition}\label{prop:Wegner}
For $\kappa > 0$, there is a constant $\Lambda = \Lambda (\kappa)$ such that for $\beta N \le \kappa$,
	\begin{equation}\label{Wegner}
		\rho_N(x) \le \Lambda (1+x^2)^{\frac \kappa 2} e^{-V(x)}.
	\end{equation}
In particular, together with the assumption on the potential $V$, it follows that in the regime $\beta N \to 2c$, there is a constant $D >0$ such that 
\[
	\rho_N(x) \le D, \quad \text{for any $x \in \R$, and any $N$.}
\]

\end{proposition}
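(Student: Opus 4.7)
The plan is to read the density $\rho_N(x)$ directly from the identity \eqref{1-point} and bound both factors on the right-hand side separately. Concretely, \eqref{1-point} gives
\[
\rho_N(x) = \frac{Z_{\beta, N-1}}{Z_{\beta, N}}\, e^{-V(x)} \, \Ex_{\beta, N-1}\!\left[ e^{\beta \sum_{j=1}^{N-1} \log|x - \lambda_j|} \right],
\]
so it suffices to (a) upper-bound the expectation in the factor on the right, and (b) upper-bound the ratio $Z_{\beta, N}/Z_{\beta, N-1}$ (equivalently, lower-bound $Z_{\beta, N-1}/Z_{\beta, N}$).

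Step (a) is precisely Lemma~\ref{lem:moment} applied to the ensemble of $N-1$ particles: since $\beta(N-1) \le \beta N \le \kappa$, the same $\kappa$-dependent constant $M=M(\kappa)$ works, giving
\[
\Ex_{\beta, N-1}\!\left[ e^{\beta \sum_{j=1}^{N-1} \log|x - \lambda_j|} \right] \le M(\kappa) (1+x^2)^{\kappa/2}.
\]
For step (b), I will plug this same bound into the identity \eqref{ratio}, which expresses the ratio of partition functions as the integral of that expectation against $e^{-V(x)} dx$. This yields
\[
\frac{Z_{\beta, N}}{Z_{\beta, N-1}} \le M(\kappa) \int (1+x^2)^{\kappa/2} e^{-V(x)} dx.
\]
The integral on the right is finite because the hypothesis $V(x)/\log(1+x^2) \to \infty$ implies that for $|x|$ large, $V(x) \ge (\kappa+2)\log(1+x^2)$, and $V$ is bounded below, so the integrand decays faster than $(1+x^2)^{-1}$ and is bounded on compact sets. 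Call this integral $I(\kappa) < \infty$; then $Z_{\beta, N-1}/Z_{\beta, N} \ge (M(\kappa) I(\kappa))^{-1}$.

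Multiplying the bounds from (a) and (b) with the $e^{-V(x)}$ factor gives
\[
\rho_N(x) \le \frac{M(\kappa)(1+x^2)^{\kappa/2}}{M(\kappa) I(\kappa)}\, e^{-V(x)} = \Lambda(\kappa)\, (1+x^2)^{\kappa/2} e^{-V(x)},
\]
with $\Lambda(\kappa) = 1/I(\kappa)$, establishing \eqref{Wegner}. For the "in particular" statement, fix any $\kappa > 2c$; then for all sufficiently large $N$ we have $\beta N \le \kappa$, and by the same super-logarithmic growth of $V$ used to bound $I(\kappa)$, the function $x \mapsto (1+x^2)^{\kappa/2} e^{-V(x)}$ is bounded on $\R$ (it is bounded on compacts since $V$ is bounded below, and tends to $0$ at $\pm\infty$). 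Taking the max of $\Lambda(\kappa)\sup_x (1+x^2)^{\kappa/2} e^{-V(x)}$ together with the trivial per-$N$ bounds on the finitely many remaining $N$ produces the uniform constant $D$.

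There is no real obstacle here — all the heavy lifting was already done in Lemma~\ref{lem:moment} and Lemma~\ref{lem:exp-varphi} via the decoupling inequality. The only thing to be slightly careful about is that the same $\kappa$ works simultaneously for the $(N-1)$-particle expectation bound and for the partition-function ratio bound, which is immediate from $\beta(N-1)\le\beta N\le\kappa$.
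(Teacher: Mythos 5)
Your step (a) and your reduction of the proposition to bounding the partition-function ratio match the paper's proof exactly. But step (b) has the inequality going the wrong way, and it is not a cosmetic slip: it breaks the argument.

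From the identity \eqref{ratio},
\[
\frac{Z_{\beta, N}}{Z_{\beta, N-1}} = \int \Ex_{\beta, N-1}\!\bigl[ e^{\beta \sum \log|x-\lambda_j|} \bigr] e^{-V(x)}\,dx,
\]
an \emph{upper} bound on the integrand (which is what Lemma~\ref{lem:moment} provides) gives an \emph{upper} bound on $Z_{\beta,N}/Z_{\beta,N-1}$, i.e.\ a \emph{lower} bound on $Z_{\beta,N-1}/Z_{\beta,N}$. That is exactly what you derived: $Z_{\beta,N-1}/Z_{\beta,N} \ge (M I)^{-1}$. But to deduce \eqref{Wegner} from $\rho_N(x) \le \frac{Z_{\beta,N-1}}{Z_{\beta,N}} M(1+x^2)^{\kappa/2}e^{-V(x)}$ you need an \emph{upper} bound on $Z_{\beta,N-1}/Z_{\beta,N}$. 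Your final display substitutes the lower bound where an upper bound is required, which is not a valid step; the cancellation of $M(\kappa)$ there is an artifact of this sign error and does not correspond to any correct inequality.

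To get the needed upper bound on $Z_{\beta,N-1}/Z_{\beta,N}$ you must \emph{lower}-bound the integral in \eqref{ratio}. The paper does this in Lemma~\ref{lem:fraction-bound} by applying Jensen's inequality twice: first $\Ex_{\beta,N-1}\bigl[e^{\beta\sum\log|x-\lambda_i|}\bigr] \ge e^{\beta(N-1)\int \log|x-y|\rho_{\beta,N-1}(y)dy}$, and then $\int e^{(\cdots)}d\alpha(x) \ge e^{\int(\cdots)d\alpha(x)}$, after which the remaining double integral $\iint \log|x-y|\,\rho_{\beta,N-1}(y)dy\,d\alpha(x)$ is bounded below using $\|\alpha\|_\infty < \infty$. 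That is the missing piece in your argument. (Your treatment of the ``in particular'' statement is fine once the main estimate is in place: since $\beta N \to 2c$, one may simply take $\kappa = \sup_N \beta N < \infty$ and use the super-logarithmic growth of $V$ to see that $(1+x^2)^{\kappa/2}e^{-V(x)}$ is bounded.)
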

\begin{proof}
It follows from the expression of the function $\rho_N(x)$ in~\eqref{1-point} and the estimate~\eqref{upper-bound-for-Z} that
\[
	\rho_N(x) \le \frac{Z_{\beta, N-1}}{Z_{\beta, N}} M (1+x^2)^{\frac \kappa 2} e^{-V(x)}.
\]
The ratio $Z_{\beta, N-1}/Z_{\beta, N}$ is bounded by a constant $C$, which will be proved in Lemma~\ref{lem:fraction-bound} below. Then the estimate~\eqref{Wegner} holds with $\Lambda = M C$. The proof of this proposition is complete.
\end{proof}

The following estimate is analogous to Lemma~4.4 in \cite{Johansson-1998}.
\begin{lemma}\label{lem:fraction-bound}
For $\kappa>0$, there is a constant $C = C(\kappa)>0$ such that for $\beta N \le \kappa$, 
\[
	\frac{Z_{\beta, N - 1}}{Z_{\beta, N}} \le C.
\]
\end{lemma}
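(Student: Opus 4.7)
The plan is to bound the reciprocal $Z_{\beta,N}/Z_{\beta,N-1}$ from below by a positive constant via a one-shot application of Jensen's inequality. Starting from the identity~\eqref{ratio} and using Fubini to interchange the $x$-integral with the expectation, I would write
\[
\frac{Z_{\beta,N}}{Z_{\beta,N-1}} = \Ex_{\beta, N-1}\bigl[\Psi_N(\lambda_1, \dots, \lambda_{N-1})\bigr], \qquad \Psi_N(\lambda) = Z \int_\R e^{\beta \sum_{j=1}^{N-1} \log|x - \lambda_j|}\, \alpha(x)\, dx,
\]
where $\alpha(x) = Z^{-1} e^{-V(x)}$ is the reference probability measure from Section~2.1.

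Next, I would apply Jensen's inequality with respect to the probability measure $\alpha$ and the convex function $\exp$ to obtain the pointwise lower bound
\[
\Psi_N(\lambda) \ge Z \exp\left(\beta \sum_{j=1}^{N-1} g(\lambda_j)\right), \qquad g(y) := \int_\R \log|x-y|\, \alpha(x)\, dx.
\]
The crux is then a \emph{uniform} lower bound on $g$. Since $V$ is bounded below by the standing assumption, $\alpha$ has bounded density: $\|\alpha\|_\infty = Z^{-1} e^{-\inf V} < \infty$. The positive part of $\log|x-y|$ contributes nonnegatively, while the negative part is controlled by the elementary estimate
\[
\int_\R \bigl(\log|x-y|\bigr)_-\, \alpha(x)\, dx \le \|\alpha\|_\infty \int_{-1}^{1} \log(1/|t|)\, dt = 2 \|\alpha\|_\infty,
\]
so $g(y) \ge -2 \|\alpha\|_\infty$ for every $y \in \R$.

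Combining these estimates with $\beta(N-1) \le \kappa$ gives $\Psi_N(\lambda) \ge Z\exp(-2\kappa \|\alpha\|_\infty)$ pointwise, hence $Z_{\beta,N}/Z_{\beta,N-1} \ge Z\exp(-2\kappa \|\alpha\|_\infty)$, and the lemma follows with $C(\kappa) = Z^{-1} \exp(2\kappa \|\alpha\|_\infty)$. I do not foresee any genuine obstacle: the whole argument turns on a single Jensen inequality, and the critical input for the uniform control of $g$—namely that the density of $\alpha$ is bounded—is handed to us directly by the hypothesis that $V$ is bounded below, with no further analysis of the tails of $V$ required.
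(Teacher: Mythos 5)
Your proof is correct and rests on the same two ingredients as the paper's—Jensen's inequality together with the uniform estimate $\int (\log|x-y|)_-\, d\alpha(x) \le 2\|\alpha\|_\infty$ obtained from the boundedness of the density of $\alpha$—and produces the identical constant $C = Z^{-1} e^{2\kappa\|\alpha\|_\infty}$. The only organizational difference is that you interchange the integrals first and then apply a single Jensen (over the $\alpha$-integral), obtaining a lower bound on $\Psi_N$ that is pointwise in $\lambda$, whereas the paper applies Jensen twice—first to the $(N-1)$-particle expectation via $\rho_{\beta,N-1}$ and then to the $\alpha$-integral—before invoking the same uniform bound; your version simply renders the second Jensen unnecessary.
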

\begin{proof}
Jensen's inequality implies that
\[
	\Ex_{\beta, N-1}\left[ e^{\beta \sum_{i = 1}^{N-1} \log|x - \lambda_i|} \right] \ge e^{\beta (N-1) \int \log|x - y|  \rho_{\beta, N -1} (y) dy}.
\]
Here $\rho_{\beta, N-1}$ is the first marginal of the ensemble~\eqref{VbE} with parameters $(\beta, N-1)$.
Using Jensen's inequality again, we deduce that 
\begin{align*}
	\frac{Z_{\beta, N}}{Z_{\beta, N - 1}} &= \int   e^{-V(x)} \Ex_{\beta, N-1} \left[ e^{\beta \sum_{i = 1}^{N-1} \log|x - \lambda_i|} \right] dx\\
	&\ge Z \int e^{\beta (N-1) \int \log|x - y|  \rho_{\beta, N -1} (y) dy } d\alpha (x) \\
	&\ge Z e^{ \beta(N-1)  \iint \log|x - y|   \rho_{\beta, N-1} (y) dy d\alpha(x)}.
\end{align*}

Let $\norm{\alpha}_\infty = Z^{-1} \sup_{x} e^{-V(x)} < \infty$. Then it is clear that 
\begin{align*}
	\iint \log|x-y|   \rho_{\beta, N -1}(y) dy d\alpha (x) &= \int \left( \int \log|x - y| d\alpha(x) \right)  \rho_{\beta, N - 1}(y) dy \\
	&\ge \int \left( \int_{x : |x - y|\le 1} \log|x - y| d\alpha(x)\right)  \rho_{\beta, N - 1}(y) dy \\
	&\ge \int \left( \int_{x : |x - y|\le 1} \log|x - y| \norm{\alpha}_\infty dx \right)  \rho_{\beta, N - 1}(y) dy \\
	&= -2\norm{\alpha}_\infty.
\end{align*}
Since $\beta(N-1) < \beta N \le \kappa$, we conclude that  
\[
	\frac{Z_{\beta, N}}{Z_{\beta, N - 1}} \ge Z e^{- 2\kappa  \norm{\alpha}_\infty},\quad \text{or}\quad 		\frac{Z_{\beta, N-1}}{Z_{\beta, N}} \le Z^{-1} e^{ 2\kappa  \norm{\alpha}_\infty} =:C.
\]
The proof is complete.
\end{proof}

The next lemma which is a generalization of Lemma~\ref{lem:moment}  helps us to show the uniform integrability of $\{\prod_{i=1}^N |x-\lambda_i|^\beta\}$ and also implies a bounded condition, one of the two sufficient conditions for the local Poisson statistics (see Section~4).
\begin{lemma}\label{lem:k-bound}
	Let $\kappa > 0$ and $L > 0$. Then there is a number $\theta = \theta(\kappa, L)$ such that for $\beta k \vee \beta N \le \kappa$, and any $x_1, x_2, \dots, x_k \in [-L, L]$,
	\begin{align*}
		\Ex\left[ \prod_{j = 1}^k \prod_{i = 1}^N |x_j - \lambda_i|^\beta\right] \le \theta^k.
	\end{align*}
\end{lemma}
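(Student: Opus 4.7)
My approach is to iterate Lemma~\ref{lem:moment} by absorbing the reference points $x_1,\dots,x_k$ into the potential one at a time. For $0\le j\le k$, introduce the modified potential
\[
  V^{(j)}(\lambda) := V(\lambda) - \beta\sum_{l=1}^{j}\log|x_l-\lambda|, \qquad V^{(0)}=V,
\]
and let $\Ex^{(j)}$ and $Z^{(j)}_{\beta,N}$ denote the expectation and partition function of the $N$-particle $\beta$-ensemble with joint density proportional to $|\Delta|^\beta\prod_i e^{-V^{(j)}(\lambda_i)}$. A direct computation shows
\[
  \frac{Z^{(j)}_{\beta,N}}{Z^{(j-1)}_{\beta,N}} = \Ex^{(j-1)}\!\left[\prod_{i=1}^N|x_j-\lambda_i|^\beta\right],
\]
so that by telescoping
\[
  \Ex\!\left[\prod_{j=1}^k\prod_{i=1}^N|x_j-\lambda_i|^\beta\right] = \frac{Z^{(k)}_{\beta,N}}{Z_{\beta,N}} = \prod_{j=1}^{k}\Ex^{(j-1)}\!\left[\prod_{i=1}^N|x_j-\lambda_i|^\beta\right].
\]
It therefore suffices to bound each factor by a constant $\theta_0 = \theta_0(\kappa,L)$, uniformly in $j$, $N$ and $x_1,\dots,x_{j-1}\in [-L,L]$.

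Each factor is controlled by rerunning the proof of Lemma~\ref{lem:moment} for the $V^{(j-1)}$-ensemble. Using $|x_j-\lambda|^2\le (1+x_j^2)(1+\lambda^2)$ and $\beta N\le \kappa$,
\[
  \Ex^{(j-1)}\!\left[\prod_i|x_j-\lambda_i|^\beta\right]
  \le (1+L^2)^{\kappa/2}\,\Ex^{(j-1)}\!\left[\exp\!\left(\frac{1}{N}\sum_i\frac{\kappa}{2}\log(1+\lambda_i^2)\right)\right],
\]
and Proposition~\ref{lem:exp} applied to the $V^{(j-1)}$-ensemble with test function $\varphi(\lambda)=(\kappa/2)\log(1+\lambda^2)$ bounds the remaining expectation by a constant $M^{(j-1)}_{\varphi,\kappa}$.

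The main (and essentially only) obstacle is verifying that $M^{(j-1)}_{\varphi,\kappa}$ is bounded by a constant depending only on $\kappa$ and $L$. Tracing through Lemmas~\ref{lem:partition-lb} and~\ref{lem:exp-varphi} for the modified potential, this reduces to uniform two-sided control on the normalizing constants $Z^{(j-1)}=\int\prod_{l<j}|x_l-\lambda|^\beta e^{-V(\lambda)}d\lambda$ and $Z^{(j-1)}_\varphi=\int(1+\lambda^2)^{\kappa/2}\prod_{l<j}|x_l-\lambda|^\beta e^{-V(\lambda)}d\lambda$, on the decoupling moment $\Ex_{\mu^{(j-1)}}[|X-Y|^{4\kappa}]$ with $\mu^{(j-1)}\propto e^{\varphi-V^{(j-1)}}$, and on the logarithmic energy $\iint \log|\lambda-\mu|\,d\alpha^{(j-1)}(\lambda)d\alpha^{(j-1)}(\mu)$ of $\alpha^{(j-1)}\propto e^{-V^{(j-1)}}$. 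Since $(j-1)\beta\le k\beta\le\kappa$, all upper bounds follow from the crude estimate $\prod_{l<j}|x_l-\lambda|^\beta\le (1+L+|\lambda|)^{(j-1)\beta}\le (1+L+|\lambda|)^{\kappa}$, whose integrability against $e^{-V(\lambda)}$ is ensured by the assumption $V(\lambda)/\log(1+\lambda^2)\to\infty$; lower bounds come from $\prod_{l<j}|x_l-\lambda|^\beta\ge 1$ on the set $\{|\lambda|\ge L+1\}$, which keeps the normalizing constants bounded away from zero. The same ingredients give a uniform $L^\infty$-bound on the density of $\alpha^{(j-1)}$, which in turn yields the uniform lower bound on its logarithmic energy. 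Once these uniform estimates are assembled, $M^{(j-1)}_{\varphi,\kappa}\le \bar M(\kappa,L)$, each telescoping factor is $\le (1+L^2)^{\kappa/2}\bar M=:\theta_0$, and the desired bound $\theta_0^k$ follows. This bookkeeping is routine but somewhat tedious; the conceptual step is simply the telescoping identity combined with Lemma~\ref{lem:moment} on each rung.
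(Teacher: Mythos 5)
Your proof is correct, and it takes a genuinely different route from the paper's. The paper proves this lemma directly, in one stroke: it applies the elementary bound $\log|x_j-\lambda_i| \le \tfrac12\log(1+x_j^2)+\tfrac12\log(1+\lambda_i^2)$ to the full double sum $\beta\sum_{j}\sum_{i}\log|x_j-\lambda_i|$, then splits into the cases $k\le N$ (where H\"older/Jensen with exponent $k/N\le 1$ is used) and $k>N$ (where $\beta k\le\kappa$ is used directly), and finally invokes Proposition~\ref{lem:exp} with $\varphi(\lambda)=\tfrac{\kappa}{2}\log(1+\lambda^2)$ \emph{once}, for the original ensemble. Your argument instead telescopes the product by absorbing the $x_l$'s into the potential and reruns Lemma~\ref{lem:moment} for each modified ensemble. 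Conceptually this is cleaner — it reduces the $k$-point bound to the $1$-point bound without any case split on $k$ versus $N$ — but the price is that you must re-establish uniformity (in $j$, $N$, and $x_1,\dots,x_{j-1}\in[-L,L]$) of all the constants hidden in Proposition~\ref{lem:exp} and Lemmas~\ref{lem:partition-lb}--\ref{lem:exp-varphi} for the modified ensembles: the normalizing constants $Z^{(j-1)}$, $Z_\varphi^{(j-1)}$, the $L^\infty$-norm and logarithmic energy of $\alpha^{(j-1)}$, and the decoupling moment. Your sketch of how to do this (upper bounds from $\prod_{l<j}|x_l-\lambda|^\beta\le(1+L+|\lambda|)^{\kappa}$ and integrability against $e^{-V}$; lower bounds on normalizing constants from $\prod_{l<j}|x_l-\lambda|^\beta\ge 1$ for $|\lambda|\ge L+1$) is sound and does close the argument, so the proof goes through; it is simply heavier bookkeeping than the paper's short, self-contained computation.
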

\begin{proof}
	The proof is similar to that of Lemma~\ref{lem:moment}. We begin with the following inequality 
	\begin{align*}
		\beta  \sum_{j = 1}^k \sum_{i = 1}^N \log|x_j - \lambda_i|  &\le  \frac\beta2  \sum_{j = 1}^k \sum_{i = 1}^N \Big( \log(1 + x_j^2) + \log (1 + \lambda_i^2) 	\Big)\\
		&\le \frac{k \beta N }{2}\log(1+L^2) + \frac {k \beta}{2} \sum_{i = 1}^N  \log(1 + \lambda_i^2) \\
		&\le  \frac{k \kappa }{2}\log(1+L^2) + \begin{cases}
		\frac kN \sum_{i = 1}^N \frac{ \kappa}{2} \log(1 + \lambda_i^2), &\text{if $k \le N$,}\\
		 \sum_{i = 1}^N \frac{ \kappa}{2} \log(1 + \lambda_i^2), &\text{if $k > N$.}
		 \end{cases}
	\end{align*}
It follows that
\begin{align*}
	\Ex\left[ \prod_{j = 1}^k \prod_{i = 1}^N |x_j - \lambda_i|^\beta\right] &\le (1+L^2)^{\frac {k \kappa} 2}
	\times 
	\begin{cases}
		 \Ex\left[ e^{\frac kN \sum_{i = 1}^N \frac{ \kappa}{2} \log(1 + \lambda_i^2)}\right], &\text{if $k \le N$,}\\
		  \Ex\left[ e^{\sum_{i = 1}^N \frac{ \kappa}{2} \log(1 + \lambda_i^2)}\right], &\text{if $k > N$,}
	\end{cases}\\
	&\le  (1+L^2)^{\frac {k \kappa} 2}  \times\left( \Ex\left[ e^{ \sum_{i = 1}^N \frac{  \kappa}{2} \log(1 + \lambda_i^2)}\right]  ^{\frac{1}N}  \right)^k \\
	&\le \theta ^k,
\end{align*}
for some constant $\theta = \theta(\kappa, L) > 0$. Here in case $k \le N$, H\"older's inequality has been used. The proof is complete.
\end{proof}

\begin{corollary}
	For $x \in \R$, let 
	\[
		X_N(x) = \prod_{i = 1}^N |x - \lambda_i|^\beta.
	\]
Then for any compact set $K \subset \R$,
\[
	\sup_{\substack{\beta N \le \kappa, x \in K\\}} \Ex[(X_N(x))^2] < \infty.
\]
In particular, for any bounded sequence $\{x_N\} \subset \R$, in the regime where $\beta N \to 2c \in (0, \infty)$, the sequence $\{X_N(x_N)\}$ is uniformly integrable.
\end{corollary}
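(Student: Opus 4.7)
The key observation is that the square $(X_N(x))^2 = \prod_{i=1}^N |x-\lambda_i|^{2\beta}$ has exactly the form covered by Lemma~\ref{lem:k-bound} with $k=2$ and $x_1 = x_2 = x$, provided we allow a slightly larger constant in the role of $\kappa$.

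Fix a compact set $K \subset \R$ and choose $L > 0$ such that $K \subset [-L, L]$. The plan is to apply Lemma~\ref{lem:k-bound} with the parameter $\kappa' := 2\kappa$ and $k = 2$. Under the assumption $\beta N \le \kappa$, one has $\beta \le \kappa$ (since $N \ge 1$), so $\beta k = 2\beta \le 2\kappa = \kappa'$ and $\beta N \le \kappa \le \kappa'$; thus the hypothesis $\beta k \vee \beta N \le \kappa'$ of Lemma~\ref{lem:k-bound} is satisfied. Taking $x_1 = x_2 = x \in K \subset [-L,L]$ yields
\[
\Ex\bigl[(X_N(x))^2\bigr] = \Ex\bigg[\prod_{j=1}^{2} \prod_{i=1}^N |x - \lambda_i|^{\beta}\bigg] \le \theta(2\kappa, L)^2,
\]
and the right-hand side is independent of $N$ and of $x \in K$. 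Taking the supremum over $x \in K$ and $\beta N \le \kappa$ gives the first assertion.

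For the second assertion, let $\{x_N\} \subset \R$ be bounded, so $\{x_N\} \subset K$ for some compact set $K$, and consider the regime $\beta N \to 2c \in (0, \infty)$. Fix any $\kappa > 2c$; then $\beta N \le \kappa$ for all $N$ sufficiently large, so by the first part
\[
\sup_N \Ex\bigl[(X_N(x_N))^2\bigr] < \infty.
\]
Uniform integrability then follows from the standard argument: for any $M > 0$,
\[
\Ex\bigl[X_N(x_N)\, \one_{\{X_N(x_N) > M\}}\bigr] \le \frac{1}{M}\, \Ex\bigl[(X_N(x_N))^2\bigr],
\]
which tends to $0$ uniformly in $N$ as $M \to \infty$. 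This completes the argument.

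\textbf{Main obstacle.} There is no real obstacle; the lemma reduces immediately to Lemma~\ref{lem:k-bound}. The only mild subtlety is that applying Lemma~\ref{lem:k-bound} directly with parameter $\kappa$ and $k=2$ would require $2\beta \le \kappa$, which is not guaranteed for small $N$; this is handled by enlarging the constant to $\kappa' = 2\kappa$, which changes only the value of $\theta$ and not the conclusion.
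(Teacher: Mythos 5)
Your proposal is correct and takes the natural route — the corollary is indeed a direct specialization of Lemma~\ref{lem:k-bound} to $k=2$ with $x_1=x_2=x$, followed by the standard second-moment criterion for uniform integrability. The one technical wrinkle you flag, that applying the lemma with the same $\kappa$ would require $2\beta\le\kappa$ (which $\beta N\le\kappa$ only guarantees for $N\ge2$), is handled cleanly by your enlargement to $\kappa'=2\kappa$; this matches the spirit of the lemma, whose bound $\theta(\kappa',L)^k$ depends only on $\kappa'$ and $L$. For completeness in the uniform integrability step you should note that for the finitely many $N$ with $\beta N>\kappa$ the second moments are still finite (each is a convergent integral under the moments assumption on $V$), so the supremum over all $N$ remains finite; this is a trivial addition and does not affect the correctness of the argument.
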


\subsection{Continuous functions of polynomial growth}

We consider here a weaker type of convergence, convergence in probability. 
We first show that the sequence of moments of the empirical distribution $L_N$ also converges. Note that the weak convergence of probability measures does not imply the convergence of moments.
%
%
%
%

\begin{lemma}
For any $k \in \N$, as $N \to \infty$ with $\beta N \to 2c \in (0, \infty)$,
\[
	\bra{L_N, x^k} \to \bra{\mu_c, x^k} \quad \text{ in $L^1$, and in probability.}
\] 
\end{lemma}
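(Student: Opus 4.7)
The plan is to upgrade the almost sure weak convergence $L_N \to \mu_c$ from Theorem~\ref{thm:LLN-intro}(iii) to convergence against the unbounded test function $x^k$, via a truncation argument that relies on the uniform one-point density bound of Proposition~\ref{prop:Wegner}. I will establish convergence in $L^1$, from which convergence in probability follows immediately by Markov's inequality.

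First, I introduce a truncation $\phi_R\colon \R \to \R$: continuous, with $\phi_R(x) = x^k$ for $|x| \le R$, $\phi_R \equiv 0$ outside $[-2R,2R]$, and $|\phi_R(x)| \le |x|^k$ everywhere; in particular $\phi_R$ is bounded by a constant depending on $R$. Since $L_N \to \mu_c$ weakly almost surely, $\langle L_N,\phi_R\rangle \to \langle \mu_c,\phi_R\rangle$ almost surely, and the bounded convergence theorem yields
$$\Ex\bigl[|\langle L_N,\phi_R\rangle - \langle \mu_c,\phi_R\rangle|\bigr] \to 0 \quad \text{as } N \to \infty, \text{ for each fixed } R.$$

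Next, I control the tail uniformly in $N$. Using the elementary bound $|x|^k \mathbf{1}_{|x|>R} \le R^{-1}|x|^{k+1}$, and fixing any $\kappa > 2c$ (so that $\beta N \le \kappa$ for all large $N$), Proposition~\ref{prop:Wegner} gives
$$\Ex\bigl[\langle L_N, |x|^k \mathbf{1}_{|x|>R}\rangle\bigr] \le R^{-1}\int |x|^{k+1}\rho_N(x)\,dx \le R^{-1}\Lambda \int |x|^{k+1}(1+x^2)^{\kappa/2} e^{-V(x)}\,dx.$$
The final integral is finite because the hypothesis $\lim_{|x|\to\infty} V(x)/\log(1+x^2) = \infty$ forces $e^{-V(x)}$ to decay faster than any polynomial. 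Hence the tail is bounded by $C_k/R$ uniformly in large $N$. The analogous bound for $\mu_c$ comes for free: since $\mathbb{E}[\langle L_N,\phi\rangle] = \int \phi\,\rho_N dx \to \int \phi\,\rho_c dx$ for every bounded continuous $\phi$, testing against $(|x|\wedge M)^{k+1}$ and letting $M \to \infty$ via monotone convergence shows $\int |x|^{k+1}\rho_c(x)\,dx < \infty$, so $\langle \mu_c, |x|^k \mathbf{1}_{|x|>R}\rangle \to 0$ as $R \to \infty$.

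The conclusion follows from the triangle inequality
$$\Ex\bigl[|\langle L_N, x^k\rangle - \langle \mu_c, x^k\rangle|\bigr] \le \Ex\bigl[\langle L_N, |x|^k \mathbf{1}_{|x|>R}\rangle\bigr] + \Ex\bigl[|\langle L_N,\phi_R\rangle - \langle \mu_c,\phi_R\rangle|\bigr] + \langle \mu_c, |x|^k \mathbf{1}_{|x|>R}\rangle$$
by first sending $N \to \infty$ and then $R \to \infty$. The only nontrivial ingredient is Proposition~\ref{prop:Wegner}; it provides exactly the uniform polynomial-times-$e^{-V}$ dominating function needed to tame the tails of the empirical measure, and absent such a bound the weak-convergence step would not upgrade to moment convergence. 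No further obstacle arises beyond bookkeeping the uniformity in $N$.
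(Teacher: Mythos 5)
Your proof is correct and follows essentially the same route as the paper: truncate $x^k$, use the uniform density bound $\rho_N(x)\le\Lambda(1+x^2)^{\kappa/2}e^{-V(x)}$ from Proposition~\ref{prop:Wegner} to control the tail uniformly in $N$, pass the truncated bounded continuous test function through the a.s.\ weak convergence, and finish with the triangle inequality; the paper clips at level $L$ with $f_L=(x^k\wedge L)\vee(-L)$ and the bound $|x|^k\mathbf{1}_{\{|x|^k>L\}}\le L^{-1}x^{2k}$, whereas you use a compactly supported cutoff $\phi_R$ and $|x|^k\mathbf{1}_{\{|x|>R\}}\le R^{-1}|x|^{k+1}$, but these are equivalent technical choices. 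One tiny slip: since $\phi_R$ only agrees with $x^k$ on $[-R,R]$ and satisfies $|\phi_R|\le|x|^k$, the first term of your triangle inequality should carry a factor of $2$, i.e.\ $\Ex[|\langle L_N,x^k-\phi_R\rangle|]\le 2\,\Ex[\langle L_N,|x|^k\mathbf{1}_{\{|x|>R\}}\rangle]$, which of course does not affect the conclusion.
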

\begin{proof}
In the regime where $\beta N \to 2c \in (0, \infty)$, we claim that for $k \in \N$, 
\[
	M_{2k} = \sup_{N} \Ex[\bra{L_N, x^{2k}}] < \infty.	
\] 
Indeed, let $\kappa = \sup_{N} (\beta N)$, and let $\Lambda=\Lambda(\kappa)$ be the constant in Proposition~\ref{prop:Wegner}. Then for any $N$, 
\[
	 \Ex[\bra{L_N, x^{2k}}] = \int x^{2k} \rho_N(x) dx \le \Lambda \int x^{2k} (1+x^2)^{\frac\kappa 2} e^{-V(x)}dx =: M_{2k} < \infty,
\]
which proves the claim.

	For $L > 0$, let 
\[
	f_L(x) = ( x^k \wedge L)\vee (-L) =\begin{cases}
	 x^k ,&\text{if $-L \le x^k \le L$,}\\
	 -L ,&\text{if $x^k \le -L$,}\\
	 L ,&\text{if $x^k \ge L$.}
	 \end{cases}
\]
Since $f_L(x)$ is a bounded continuous function, as $N \to \infty$,
	\[
		\bra{L_N, f_L} \to \bra{\mu_c, f_L} \quad \text{in $L^1$.}
	\] 
It then follows that $\bra{\mu_c, x^{k}} = \lim_{L \to \infty} \bra{\mu_c, f_L} \le M_k$, for any even $k$. Consequently, all moments of $\mu_c$ are finite.
	
	It is clear that 
	\[
		|\bra{L_N, x^k} - \bra{L_N, f_L}| \le \bra{L_N, |x|^k \one_{\{|x|^k > L\}}} \le \frac{1}{L} \bra{L_N, x^{2k}}.
	\]
Then, by taking the expectations of both sides, we obtain that  
\begin{equation}\label{fL}
	0 \le \Ex[|\bra{L_N, x^k} - \bra{L_N, f_L}|] \le \frac{ \Ex[\bra{L_N, x^{2k}}] }{L} \le  \frac{M_{2k}}{L}.
\end{equation}
Next, the triangular inequality yields
\begin{align*}
	\Ex[|\bra{L_N, x^k} - \bra{\mu_c, x^k}|] &\le \Ex[|\bra{L_N, x^k} -  \bra{L_N, f_L}|] +\Ex[| \bra{L_N, f_L} - \bra{\mu_c, f_L}|] \\
	&\quad + |\bra{\mu_c, x^k - f_L}|.
\end{align*}
The first term and the third term are bounded by $M_{2k} /{L}$ by \eqref{fL}. The second term converges to zero as $N \to \infty$. These imply that $\Ex[|\bra{L_N, x^k} - \bra{\mu_c, x^k}|] \to 0$ as $N \to \infty$. The proof is complete.
\end{proof}

The following result is a consequence of the convergence of moments.
\begin{theorem}
	Let $f$ be a continuous function of polynomial growth. Then as $\beta N \to 2c \in (0, \infty)$,
	\[
		\bra{L_N, f} \to \bra{\mu_c, f} \quad \text{in probability.}
	\]
\end{theorem}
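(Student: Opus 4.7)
The plan is to reduce to the bounded continuous case by a truncation argument. Fix $f \in C(\R)$ with $|f(x)| \le A(1+|x|^{m})$ for some constants $A>0$ and $m \in \N$. For $L>0$, set
\[
    g_L(x) = (f(x) \wedge L) \vee (-L),
\]
which is bounded and continuous. The starting decomposition is the triangle inequality
\[
    |\bra{L_N, f} - \bra{\mu_c, f}| \le \bra{L_N, |f - g_L|} + |\bra{L_N, g_L} - \bra{\mu_c, g_L}| + \bra{\mu_c, |f - g_L|}.
\]

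First, since $L_N \to \mu_c$ weakly almost surely (part of Theorem~\ref{thm:LLN-intro}) and $g_L$ is bounded continuous, the middle term converges to $0$ almost surely, hence in probability, for each fixed $L$. Next, to handle the truncation error uniformly, I would use the pointwise bound $|f - g_L| \le |f|\,\one_{\{|f| > L\}} \le f^2/L$ together with $f(x)^2 \le 2A^2(1 + |x|^{2m})$, which gives
\[
    \bra{L_N, |f - g_L|} \le \frac{2A^2}{L}\bigl(1 + \bra{L_N, x^{2m}}\bigr),
    \quad
    \bra{\mu_c, |f - g_L|} \le \frac{2A^2}{L}\bigl(1 + \bra{\mu_c, x^{2m}}\bigr).
\]
The uniform moment bound established in the proof of the preceding lemma, $\sup_N \Ex[\bra{L_N, x^{2m}}] \le M_{2m} < \infty$, together with $\bra{\mu_c, x^{2m}} < \infty$ (also shown there), makes both right-hand sides small uniformly in $N$ once $L$ is chosen large.

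Putting it together: given $\epsilon, \eta > 0$, choose $L$ large enough so that $\bra{\mu_c, |f - g_L|} < \epsilon/3$ deterministically, and so that Markov's inequality yields
\[
    \Prob\bigl( \bra{L_N, |f - g_L|} > \epsilon/3 \bigr) \le \frac{3}{\epsilon} \cdot \frac{2A^2(1 + M_{2m})}{L} < \eta/2
\]
uniformly in $N$. For such a fixed $L$, the almost sure convergence of $\bra{L_N, g_L}$ implies $\Prob(|\bra{L_N, g_L} - \bra{\mu_c, g_L}| > \epsilon/3) < \eta/2$ for all $N$ sufficiently large. Combining these bounds through the triangle inequality gives $\Prob(|\bra{L_N, f} - \bra{\mu_c, f}| > \epsilon) < \eta$ for $N$ large, which is the desired convergence in probability.

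The only step with any substance is ensuring the truncation error is controlled uniformly in $N$, and that is exactly what the uniform moment bound in the previous lemma buys us; so there is no real obstacle, the argument is a standard truncation plus moment bound once one has the two ingredients (a.s.\ weak convergence and uniform control of a higher moment than the growth rate of $f$).
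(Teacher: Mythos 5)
Your proof is correct and follows essentially the same route the paper sketches: truncate $f$ to a bounded continuous function, apply almost-sure weak convergence of $L_N$ to $\mu_c$ for the truncated part, and control the truncation error uniformly in $N$ via the moment bound $\sup_N \Ex[\bra{L_N, x^{2m}}] < \infty$ (from the Wegner-type estimate of Proposition~\ref{prop:Wegner}) together with finiteness of the moments of $\mu_c$. The paper phrases the reduction slightly differently (first noting polynomials converge, then invoking ``a similar truncation argument''), but the key ingredients and the structure of the estimate are the same.
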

\begin{proof}
It is clear that the convergence holds, if $f$ is a polynomial. Now assume that the function $f$ is continuous of polynomial growth, meaning that there is a polynomial $p$ such that $|f(x)| \le p(x)$ for all $x \in \R$. Then by a similar truncation argument as used in the proof of the above lemma, we can easily deduce that the sequence $\{\bra{L_N, f}\}$ converges in probability to $\bra{\mu_c, f}$. 
The theorem is proved.
\end{proof}
%

\subsection{$\log$ functions}

\begin{lemma}\label{lem:log}
Let $x \in \R$ be given. Then as $\beta N \to 2c$,
	\[
		\frac1N \sum_{i = 1}^{N} \log|x - \lambda_i|  \to \int \log|x - y| \rho_c(y) dy \quad \text{in probability.}
	\]
\end{lemma}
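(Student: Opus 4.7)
The plan is to isolate the logarithmic singularity at $y = x$ by truncating from below and then invoke the already-established convergence for continuous functions of polynomial growth. For $\epsilon \in (0,1)$, I would set
\[
	f_\epsilon(y) := \max\bigl(\log|x - y|, \log \epsilon\bigr),
\]
which is continuous in $y$, agrees with $\log|x - y|$ off the interval $\{|y - x| < \epsilon\}$, and grows only logarithmically at infinity. In particular $f_\epsilon$ is of polynomial growth, so by the preceding theorem,
\[
	\bra{L_N, f_\epsilon} \Pto \bra{\mu_c, f_\epsilon} \quad \text{as } \beta N \to 2c.
\]

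Next I would control the two truncation errors uniformly using Proposition~\ref{prop:Wegner}. Since $f_\epsilon(y) \ge \log|x - y|$ pointwise,
\[
	0 \le \bra{L_N, f_\epsilon} - \frac1N \sum_{i = 1}^N \log|x - \lambda_i| = \frac1N \sum_{i = 1}^N (\log \epsilon - \log|x - \lambda_i|)\one_{\{|x - \lambda_i| < \epsilon\}}.
\]
Taking expectations and using the uniform bound $\rho_N \le D$ from Proposition~\ref{prop:Wegner},
\[
	\Ex\!\left[ \bra{L_N, f_\epsilon} - \frac1N \sum_{i = 1}^N \log|x - \lambda_i| \right] \le D \int_{|x - y| < \epsilon} (\log \epsilon - \log|x - y|)\, dy = 2D\epsilon.
\]
The same pointwise bound $\rho_c \le D$ a.e.\ transfers to the limit via the portmanteau theorem applied to the weak convergence $\rho_N(y)\, dy \to \rho_c(y)\, dy$, so the analogous computation yields
\[
	0 \le \bra{\mu_c, f_\epsilon} - \int \log|x - y| \rho_c(y)\, dy \le 2D\epsilon.
\]

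Finally, the triangle inequality together with Markov's inequality bounds, for any $\delta > 0$, the probability that the left-hand side of the lemma deviates from its claimed limit by more than $\delta$ by $\Prob(|\bra{L_N, f_\epsilon} - \bra{\mu_c, f_\epsilon}| > \delta/3)$ plus a term of order $D\epsilon/\delta$. Sending $N \to \infty$ first and then $\epsilon \to 0$ finishes the argument. The one delicate point is the near-diagonal contribution, and it is precisely the uniform density bound of Proposition~\ref{prop:Wegner} that kills this small-scale contribution on both the empirical and the limiting sides.
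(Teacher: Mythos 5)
Your proposal is correct and follows essentially the same route as the paper's proof: you use the same lower truncation of the logarithm (your $f_\epsilon$ is the paper's $f_L$ with $\epsilon = e^{-L}$), invoke the theorem on continuous functions of polynomial growth for the truncated part, and kill the near-diagonal contribution on both the empirical and limiting sides via the uniform density bound from Proposition~\ref{prop:Wegner}. The only differences are cosmetic — your parametrization gives the slightly sharper error $2D\epsilon$ in place of the paper's $2M(1+L)e^{-L}$, but the structure is identical.
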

\begin{proof}
	We first remark that since $\rho_N(x) \le D$, and since the sequence of probability measures $\{\rho_N(x) dx\}$ converges weakly to $\rho_c(x)dx$, it follows that the density $\rho_c$ is also bounded by $D$ (almost everywhere). In addition, recall that all moments of $\rho_c$ are finite. Thus, for any $x\in \R$,
	\[
		\int \log|x - y| \rho_c(y) dy < \infty.
	\]
	
For $L>0$, the truncation $f_L(y)=\log|x-y| \vee (-L)$ is a continuous function of polynomial growth, and hence, as $N \to \infty$,
\[
	\frac{1}{N} \sum_{i = 1}^N \log|x - \lambda_i| \vee (-L) \to \int (\log|x - y| \vee (-L)) \rho_c(y) dy \quad \text{in probability. }
\]
Next, we use the inequality 
\[	
	|\log|x-y| - f_L(y)| \le -\log|x-y| \one_{\{|y-x| \le e^{-L}\}}
\]
to deduce that 
\begin{align*}
	\Ex[|\bra{L_N, \log|x-\cdot|} - \bra{L_N, f_L}|] &\le \Ex[\bra{L_N, -\log|x-\cdot| \one_{\{|\cdot - x| \le e^{-L} \}}}] \\
	&= \int_{[x - e^{-L}, x+e^{-L}]} (-\log|x - y|) \rho_N(y)dy \\
	&\le M \int_{[x - e^{-L}, x+e^{-L}]} (-\log|x - y|)dy \\
	&=2M (1+L) e^{-L}\\
	&\to 0 \quad \text{as}\quad L \to \infty.
\end{align*}
The desired result follows easily from the triangular inequality. The proof is complete.
\end{proof}

\subsection{Partition functions}
Let $\Prob_{\beta, N-1}$ denote the probability of the beta ensemble~\eqref{VbE} with parameters $\beta$ and $(N-1)$. Then  Lemma~\ref{lem:log} still holds if $N$ is replaced by $(N-1)$, that is, under $\Prob_{\beta, N-1}$, for fixed $x \in \R$, as $\beta N \to 2c$,
\[
	\frac{1}{N-1} \sum_{i = 1}^{N - 1} \log|x - \lambda_i| \to \int \log|x-y| \rho_c(y)dy \quad \text{in probability}.
\]
From which, we get the following results.
\begin{lemma}\label{lem:Ex-of-Z}
For fixed $x \in \R$, as $\beta N \to 2c \in (0, \infty)$,
	\[
		 \prod_{i = 1}^{N-1}|x - \lambda_i|^\beta = e^{\beta \sum_{i = 1}^{N-1} \log|x - \lambda_i|} \to e^{2c \int \log|x - y| \rho_c(y) dy}
	\]
in probability under $\Prob_{\beta, N -1}$ by the continuous mapping theorem, and then
\begin{equation}\label{convergence-of-Z}
	\Ex_{\beta, N - 1}\bigg[\prod_{i = 1}^{N-1}|x - \lambda_i|^\beta \bigg] \to e^{2c \int \log|x - y| \rho_c(y) dy},
\end{equation}
by the uniform integrability.
\end{lemma}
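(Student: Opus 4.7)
The plan is to execute the two-step strategy announced in the statement: first establish the in-probability convergence, then upgrade to convergence of expectations via uniform integrability. Both ingredients are essentially in place from the previous subsections, so the proof is really just an assembly.

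For the convergence in probability, I would write
\[
\beta \sum_{i=1}^{N-1} \log|x-\lambda_i| \;=\; \beta(N-1)\cdot\frac{1}{N-1}\sum_{i=1}^{N-1}\log|x-\lambda_i|,
\]
noting that the deterministic prefactor $\beta(N-1)$ still tends to $2c$, and applying Lemma~\ref{lem:log} to the $(N-1)$-particle ensemble. The latter has parameters satisfying $\beta(N-1)\to 2c$ with the same limiting measure $\mu_c$, so its empirical average $\frac{1}{N-1}\sum_{i=1}^{N-1}\log|x-\lambda_i|$ converges in probability to $\int \log|x-y|\rho_c(y)\,dy$. Combining by Slutsky's theorem and then composing with the continuous map $z\mapsto e^z$ yields
\[
\prod_{i=1}^{N-1}|x-\lambda_i|^\beta \;\to\; e^{2c\int \log|x-y|\rho_c(y)\,dy}\quad\text{in probability under } \Prob_{\beta,N-1}.
\]

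For the convergence of the expectation, I would invoke the corollary immediately following Lemma~\ref{lem:k-bound} with the constant bounded sequence $x_N\equiv x$, applied to the $(N-1)$-particle ensemble. That corollary yields $\sup_N \Ex_{\beta,N-1}[(X_{N-1}(x))^2]<\infty$, so $\{X_{N-1}(x)\}$ is bounded in $L^2$ and hence uniformly integrable. Convergence in probability together with uniform integrability implies $L^1$-convergence, and in particular the convergence of expectations stated in \eqref{convergence-of-Z}.

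There is no genuine obstacle here; the substantive estimates already live in Lemmas~\ref{lem:log} and~\ref{lem:k-bound}. The only bookkeeping point is to confirm that both results apply verbatim after substituting $N-1$ for $N$, which is immediate since their hypotheses were phrased in terms of a generic sequence of beta ensembles satisfying $\beta N\to 2c$, and the reparametrized sequence $(\beta, N-1)$ also fulfills $\beta(N-1)\to 2c$.
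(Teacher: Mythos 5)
Your proof is correct and follows exactly the paper's route: apply Lemma~\ref{lem:log} to the $(N-1)$-particle ensemble (the paper notes in the preceding paragraph that the lemma "still holds if $N$ is replaced by $(N-1)$"), multiply by the deterministic prefactor $\beta(N-1)\to 2c$, pass through the continuous map $z\mapsto e^z$, and then upgrade to convergence of expectations via the $L^2$-boundedness from the corollary to Lemma~\ref{lem:k-bound}. The only cosmetic difference is your explicit invocation of Slutsky's theorem, which the paper leaves implicit since a deterministic sequence times a sequence converging in probability to a constant is elementary.
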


\begin{lemma}\label{lem:fraction}
As $\beta N \to 2c$,
	\begin{align*}
		\frac{Z_{\beta, N}}{Z_{\beta, N - 1}} &=  \int   e^{-V(x)} \Ex_{\beta, N - 1}\bigg[\prod_{i = 1}^{N-1}|x - \lambda_i|^\beta \bigg] dx \\
		&\to \int e^{-V(x) + 2c \int \log|x - y| \rho_c(y) dy} dx.
	\end{align*}
\end{lemma}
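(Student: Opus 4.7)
The plan is to recognize this as a standard dominated convergence argument, since all the hard work has been done in the preceding lemmas. The first equality is immediate from the formula~\eqref{ratio} derived earlier, so the task is just to pass the limit inside the outer integral over $x$.

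For the pointwise convergence of the integrand, Lemma~\ref{lem:Ex-of-Z} (applied for each fixed $x$) gives precisely
\[
e^{-V(x)}\,\Ex_{\beta, N-1}\Big[\prod_{i=1}^{N-1}|x-\lambda_i|^\beta\Big] \longrightarrow e^{-V(x) + 2c \int \log|x-y|\,\rho_c(y)\,dy}.
\]
So I need a dominating function independent of $N$ (for all sufficiently large $N$). Since $\beta N \to 2c$, I fix any $\kappa > 2c$; then $\beta(N-1) \le \kappa$ for all $N$ large enough, and Lemma~\ref{lem:moment} applied to the $(N-1)$-particle ensemble yields a constant $M=M(\kappa)$ with
\[
\Ex_{\beta, N-1}\Big[\prod_{i=1}^{N-1}|x-\lambda_i|^\beta\Big] \le M\,(1+x^2)^{\kappa/2},
\]
uniformly in $x$ and in $N$. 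Hence the integrand is bounded by $M\,(1+x^2)^{\kappa/2} e^{-V(x)}$, which is integrable thanks to the standing assumption $V(x)/\log(1+x^2) \to \infty$ as $|x| \to \infty$ (this is exactly the moments condition~\eqref{condition-V}).

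The dominated convergence theorem then gives the desired limit, and automatically ensures that the limiting integral $\int e^{-V(x) + 2c\int \log|x-y|\rho_c(y)\,dy}\,dx$ is finite. There is no real obstacle here; the only point that warrants attention is that the truncation index $\kappa$ must be chosen strictly larger than $2c$ so that Lemma~\ref{lem:moment} applies to all large enough $N$ with a single constant $M$, giving a genuinely $N$-independent dominator. With that, the proof reduces to citing Lemma~\ref{lem:Ex-of-Z}, Lemma~\ref{lem:moment}, and the hypothesis on $V$, and then applying dominated convergence.
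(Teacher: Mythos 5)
Your proof is correct and matches the paper's argument exactly: the paper also combines the pointwise convergence from Lemma~\ref{lem:Ex-of-Z} with the uniform bound \eqref{upper-bound-for-Z} from Lemma~\ref{lem:moment} and applies dominated convergence. Your added remark about fixing $\kappa > 2c$ so that a single constant $M$ works for all large $N$ is a useful clarification of a detail the paper leaves implicit (one small quibble: the growth assumption on $V$ implies, rather than equals, the moments condition~\eqref{condition-V}).
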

\begin{proof}
The desired result follows from \eqref{upper-bound-for-Z} and \eqref{convergence-of-Z} by using Lebesgue's dominated convergence theorem.
\end{proof}

\subsection{Proof of Theorem~\ref{main-result}(i)}
In this section, we give a proof of the first part of our main result (Theorem~\ref{main-result}(i)) which is restated here for convenience.
\begin{theorem}\label{thm:global}
	The limiting measure $\mu_c$ in the regime where $\beta N \to 2c \in (0, \infty)$ has bounded density $\rho_c$ which can be chosen to satisfy the following equation 
\begin{equation}\label{rhoc}
	\rho_c(x) = \frac{1}{Z_{c} }e^{-V(x) + 2 c  \int \log|x - y| \rho_c(y) dy}, \quad \text{for all $x \in \R$.}
\end{equation}
In particular, $\rho_c(x) > 0$, for all $x \in \R$, and thus, the measure $\mu_c$ has full support.
\end{theorem}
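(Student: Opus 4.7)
The plan is to combine the explicit formula for $\rho_N$ in \eqref{1-point} with the pointwise convergence results of Lemma~\ref{lem:Ex-of-Z} and Lemma~\ref{lem:fraction}, and then match the resulting pointwise limit of $\rho_N(x)$ against the already-known weak limit $\rho_c(x)\,dx$.

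\textbf{Step 1: pointwise limit of $\rho_N$.} Starting from the factorization
\[
\rho_N(x) = \frac{Z_{\beta,N-1}}{Z_{\beta,N}}\, e^{-V(x)}\, \Ex_{\beta,N-1}\!\left[\prod_{i=1}^{N-1}|x-\lambda_i|^\beta\right],
\]
Lemma~\ref{lem:Ex-of-Z} handles the conditional expectation and Lemma~\ref{lem:fraction} handles the ratio of partition functions, with limit $Z_c := \int e^{-V(x)+2c\int \log|x-y|\rho_c(y)dy}\,dx$. Thus for every fixed $x\in\R$,
\[
\rho_N(x) \longrightarrow \rho^*(x) := \frac{1}{Z_c}\, e^{-V(x)+2c\int \log|x-y|\rho_c(y)\,dy}.
\]
For this to make sense at every $x$, I would first check that the logarithmic potential $U_c(x) := \int \log|x-y|\rho_c(y)dy$ is finite and continuous at every $x$: the local singularity is integrable because $\rho_c$ is bounded (recorded in the proof of Lemma~\ref{lem:log} via Proposition~\ref{prop:Wegner} and weak convergence), and the tail is controlled because all moments of $\rho_c$ are finite (Section~3.2). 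Continuity of $U_c$ then follows by splitting and dominated convergence, so $\rho^*$ is a continuous, strictly positive function on $\R$.

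\textbf{Step 2: identification with $\rho_c$.} To identify $\rho^*$ with $\rho_c$, I invoke dominated convergence. Proposition~\ref{prop:Wegner} supplies the $N$-independent integrable majorant $\rho_N(x) \le \Lambda(1+x^2)^{\kappa/2}e^{-V(x)}$, which is Lebesgue-integrable under the standing assumption on $V$. Hence for every bounded continuous $f\colon \R\to\R$,
\[
\int f(x)\rho_N(x)\,dx \longrightarrow \int f(x)\rho^*(x)\,dx.
\]
On the other hand, by Theorem~\ref{thm:LLN-intro} combined with the fact that $\rho_N\,dx$ is the mean measure of $L_N$, one has $\int f(x)\rho_N(x)\,dx \to \int f(x)\rho_c(x)\,dx$. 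Comparing the two limits gives $\rho^* = \rho_c$ Lebesgue-a.e.; replacing $\rho_c$ by its continuous version $\rho^*$ yields the fixed-point identity \eqref{rhoc} pointwise. Boundedness of $\rho_c$ is automatic because the right-hand side is bounded in $x$ (the exponent $-V(x)+2cU_c(x)$ tends to $-\infty$ at $\pm\infty$ by the growth hypothesis on $V$).

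\textbf{Step 3: positivity.} The right-hand side of \eqref{rhoc} is an exponential of a finite quantity, hence strictly positive for every $x\in\R$. Therefore $\rho_c(x)>0$ everywhere and $\supp \mu_c = \R$.

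The only step that requires a bit of care is verifying the pointwise finiteness and continuity of $U_c$ in Step~1, and checking that the dominating function $(1+x^2)^{\kappa/2}e^{-V(x)}$ in Step~2 is genuinely integrable; both follow directly from the standing assumption $V(x)/\log(1+x^2)\to\infty$ together with the boundedness and finite moments of $\rho_c$, so I expect no serious obstacle beyond these routine checks.
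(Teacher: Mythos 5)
Your proof is correct and follows essentially the same route as the paper: pointwise convergence $\rho_N(x)\to\rho^*(x)$ from Lemmas~\ref{lem:Ex-of-Z} and~\ref{lem:fraction}, then identification with $\rho_c$ by passing to the limit in integrals of $\rho_N$ and matching against the weak limit, then positivity of the exponential. The only (cosmetic) difference is in the identification step: the paper applies bounded convergence on compact intervals $[a,b]$ using the uniform bound $\rho_N\le D$ from Proposition~\ref{prop:Wegner} and observes that $\int_a^b\rho_N\,dx\to\int_a^b\rho_c\,dx$ by weak convergence (every point being a continuity point of the absolutely continuous limit), whereas you test against arbitrary bounded continuous $f$ using dominated convergence with the integrable Wegner-type majorant; likewise the paper obtains boundedness of $\rho_c$ simply by letting $N\to\infty$ in $\rho_N\le D$, which is a hair more direct than arguing from the growth of the exponent, but both are valid.
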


\begin{proof}
Recall that  the first marginal $\rho_N(x)$ can be expressed as
\begin{align*}
	\rho_N(x) 
		&=\frac{Z_{\beta, N - 1}}{Z_{\beta, N}} e^{-V(x)} \Ex_{\beta, N-1} \bigg[\prod_{i = 1}^{N - 1}{|x - \lambda_i|^\beta} \bigg].
\end{align*}
Then it follows from Lemma~\ref{lem:Ex-of-Z} and Lemma~\ref{lem:fraction} that as $N \to \infty$,
\[
	\rho_N(x) \to \frac{e^{-V(x) + 2 c  \int \log|x - y| \rho_c(y) dy }}{ \int e^{-V(t) + 2c \int \log|t - y| \rho_c(y) dy} dt} = \frac{1}{Z_{c} }e^{-V(x) + 2 c  \int \log|x - y| \rho_c(y) dy} =: \tilde\rho_c(x).
\]

Recall also that $\rho_N(x) \le D$, for any $x \in \R$ and any $N$. Thus, for any $-\infty <a< b  < \infty$,
\[
	\int_{a}^b \rho_N(x) dx \to \int_a^b \tilde\rho_c(x) dx,
\]
by the bounded convergence theorem. On the other hand, since the sequence of measures $\{\rho_N(x)dx\}$ converges weakly to $\rho_c(x) dx$, it follows that
\[
	\int_{a}^b \rho_N(x) dx \to \int_a^b \rho_c(x)dx.
\]
Therefore, $\rho_c(x) = \tilde\rho_c(x)$, for almost every $x \in \R$. Modify the density $\rho_c(x)$ by taking $\rho_c(x) = \tilde\rho_c(x)$ for all $x \in \R$, we get the relation
\begin{align*}
	\rho_c(x) = \frac{1}{Z_{c} }e^{-V(x) + 2 c  \int \log|x - y| \rho_c(y) dy}, \quad \text{for all $x \in \R$.}
\end{align*}
This implies that $\rho_c(x) > 0$ for all $x \in \R$, meaning that the limiting measure $\mu_c$ has full support. The proof is complete. 
\end{proof}

\section{Poisson statistics}
The aim of this section is to prove Theorem~\ref{main-result}(ii) which is also restated here for convenience.
\begin{theorem}\label{thm:local}
Assume that the potential $V$ is continuous and that 
\[
	\lim_{x \to \pm \infty} \frac{V(x)}{\log(1+x^2)} = \infty.
\]
Then for any fixed $E \in\R$, the local statistics $\xi_N(E)$ converges weakly to a homogeneous Poisson point process on $\R$ with density $\rho_c(E)>0$. Here note that the density $\rho_c(x)$ chosen to satisfy the relation in Theorem~{\rm\ref{thm:global}} is continuous.
\end{theorem}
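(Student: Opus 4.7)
The proof proceeds by the method of factorial moments applied to the $k$-point correlation functions of the beta ensemble. Namely, for any bounded Borel set $A\subset\R$ and every $k\ge 1$, I will show
\[
\Ex\!\left[\xi_N(E)(A)\bigl(\xi_N(E)(A)-1\bigr)\cdots\bigl(\xi_N(E)(A)-k+1\bigr)\right] \longrightarrow \bigl(\rho_c(E)\,|A|\bigr)^k.
\]
Together with the uniform boundedness of these moments (which gives tightness), this implies $\xi_N(E)(A)\to \mathrm{Poisson}(\rho_c(E)\,|A|)$ in distribution for every such $A$; since the beta ensemble particles are almost surely distinct and the limiting intensity is diffuse, standard Poisson-limit criteria (in the spirit of Kallenberg's theorem) then upgrade this to the weak convergence $\xi_N(E)\Rightarrow \mathrm{Poisson}(\rho_c(E)\,dx)$.

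The left-hand side above equals $\int_{A^k}N^{-k}\rho_N^{(k)}(E+y_1/N,\ldots,E+y_k/N)\,dy$, where $\rho_N^{(k)}$ denotes the $k$-point correlation function of~\eqref{VbE}. Integrating out the last $N-k$ variables as in~\eqref{1-point}, one obtains
\[
\rho_N^{(k)}(x_1,\ldots,x_k) = \frac{N!}{(N-k)!}\,\frac{Z_{\beta,N-k}}{Z_{\beta,N}}\prod_{i=1}^k e^{-V(x_i)}\prod_{i<j}|x_i-x_j|^\beta\;\Ex_{\beta,N-k}\!\!\left[\prod_{i=1}^k\prod_{j=1}^{N-k}|x_i-\lambda_j|^\beta\right].
\]
Plugging in $x_i=E+y_i/N$ and dividing by $N^k$, I analyze each factor in the regime $\beta N\to 2c$: (a) $N!/((N-k)!\,N^k)\to 1$; (b) $\prod_i e^{-V(E+y_i/N)}\to e^{-kV(E)}$ by continuity of $V$; (c) the Vandermonde factor $\prod_{i<j}|y_i-y_j|^\beta\cdot N^{-\beta k(k-1)/2}$ tends to $1$ because $\beta\to 0$ and $\beta\log N\to 0$; (d) the partition function ratio $Z_{\beta,N-k}/Z_{\beta,N}=\prod_{\ell=1}^k Z_{\beta,N-\ell+1}^{-1}\,Z_{\beta,N-\ell}$ converges to $Z_c^{-k}$ by $k$ iterated applications of Lemma~\ref{lem:fraction} (noting each $\beta(N-\ell)\to 2c$ as well); and (e) the inner expectation converges to $\prod_{i=1}^k e^{2c\int\log|E-y|\rho_c(y)\,dy}$. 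Combining with the formula~\eqref{rhoc} from Theorem~\ref{thm:global} delivers the pointwise limit $N^{-k}\rho_N^{(k)}(E+y_1/N,\ldots,E+y_k/N)\to\rho_c(E)^k$.

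To promote pointwise convergence to convergence of the $A^k$-integral I will invoke Lebesgue's dominated convergence theorem. The factors (a)--(d) are easily shown to be uniformly bounded on $A^k$: one iterates Lemma~\ref{lem:fraction-bound} for (d), uses continuity of $V$ on the compact set $A$ for (b), and uses $\beta\to 0$ together with the boundedness of $A$ for (c). The multi-point expectation in (e) is uniformly bounded by $\theta^k$ thanks to Lemma~\ref{lem:k-bound}. The main technical obstacle is step (e): whereas Lemma~\ref{lem:Ex-of-Z} delivers the single-point limit at a fixed $x$, here the base points $x_i=E+y_i/N$ depend on $N$ and there are $k$ of them simultaneously. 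I plan to reduce this to Lemma~\ref{lem:log} as follows. First I establish, in probability, that $\beta\sum_{j=1}^{N-k}\log|E+y_i/N-\lambda_j|\to 2c\int\log|E-y|\rho_c(y)\,dy$ for each fixed $i$, by splitting the difference $\log|E+y_i/N-\lambda_j|-\log|E-\lambda_j|$ according to whether $\lambda_j$ lies within distance $\varepsilon$ of $E$ and controlling the bad part with the uniform bound $\rho_{N-k}\le D$ from Proposition~\ref{prop:Wegner} followed by $\varepsilon\to 0$. Summing over $i$ and applying the continuous mapping theorem yields the in-probability convergence of the integrand in (e), and Lemma~\ref{lem:k-bound} provides the uniform integrability needed to pass the limit inside the expectation, completing the proof.
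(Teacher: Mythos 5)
Your proposal follows essentially the same route as the paper: the paper verifies conditions (A) (pointwise convergence of the rescaled $k$-point correlation functions) and (B) (the uniform bound from Lemma~\ref{lem:k-bound}) and invokes the factorial-moment criterion from the appendix of \cite{Peche-2015}, whereas you simply make that factorial-moment step explicit; your decomposition into factors (a)--(e) and the use of Lemmas~\ref{lem:k-bound}, \ref{lem:log}, \ref{lem:fraction}, \ref{lem:fraction-bound} and Proposition~\ref{prop:Wegner} match the paper's proof exactly. The only technical variation is in proving the in-probability convergence of $\frac{1}{N}\sum_i \log|E+x/N-\lambda_i|$ (the paper's Lemma~\ref{lem:log-E}): the paper bounds $|\log|u|-\log|v|| \le |u-v|\left(|u|^{-1}+|v|^{-1}\right)$ and exploits a fractional $\delta$-moment together with the Wegner-type bound, while you propose an $\varepsilon$-neighborhood truncation around $E$ --- both arguments rest on Proposition~\ref{prop:Wegner} in the same way and are equally valid.
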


The ideas of proving the local Poisson statistics are as follows. 
Let $R_N^{(k)}$ denote the $k$th correlation function  of $\xi_N(E)$ which is given by 
\[
	R_N^{(k)} (x_1, x_2, \dots, x_k) = \frac{N!}{N^k(N-k)!}\rho_N^{(k)} \left(E+ \frac{x_1}N, E+\frac{x_2}N, \dots, E + \frac{x_k}N \right),
\] 
where $\rho_N^{(k)}$ is the $k$-dimensional marginal of the beta ensembles~\eqref{VbE}
\begin{align}
	\rho_N^{(k)}(x_1, \dots, x_k) &= \frac{1}{Z_{\beta, N}}|\Delta(x)|^\beta e^{-\sum_{j = 1}^k V(x_j)}  \nonumber\\
	& \times \idotsint  \bigg( \prod_{\substack{1 \le j \le k,\\ 1 \le i \le N-k}}  |x_j - \lambda_i|^\beta \bigg) |\Delta(\lambda)|^\beta e^{-\sum_{i = 1}^{N - k} V(\lambda_i)} d\lambda_1 \cdots d\lambda_{N - k} \nonumber\\
	&=\frac{Z_{\beta, N-k}}{Z_{\beta, N}} |\Delta(x)|^\beta e^{-\sum_{j = 1}^k V(x_j)} \Ex_{\beta, N-k}\bigg[ \prod_{j = 1}^k \prod_{i = 1}^{N - k} |x_j - \lambda_i|^\beta\bigg]. \label{k-marginal}
\end{align}
Here the points $\{x_1, \dots, x_k\}$ are assumed to be distinct. In order to show that the local statistics $\xi_N(E)$ converges to a homogeneous Poisson process with density $\rho_c(E)$, it suffices to prove the following two conditions (see Appendix in \cite{Peche-2015})
\begin{itemize}
	\item[(A)] for distinct $\{x_1, x_2, \dots, x_k\}$,
	\[
		R_N^{(k)} (x_1, x_2, \dots, x_k) \to \rho_c(E)^k;
	\]
	\item[(B)] and for any compact set $K \subset \R$, there is a constant $\theta_K$ such that 
\begin{equation*}
	R_N^{(k)} (x_1, x_2, \dots, x_k) \le  ( \theta_K)^k, \quad \text{for $k \le N$, for any $x_i \in K$.}
\end{equation*}
\end{itemize}
The latter is a consequence of Lemma~\ref{lem:k-bound} while the former follows from the continuity of $V$ and the following result which is an extension of Lemma~\ref{lem:log}.

\begin{lemma}\label{lem:log-E}
	Let $E \in \R$ and $x \in \R$ be given. Then as $N \to \infty$ with $\beta N \to 2c$,
	\[
		\frac{1}{N} \sum_{i = 1}^N \log \left|E + \frac{x}{N}- \lambda_i \right| \to \int \log|E - y| \rho_c(y) dy \quad\text{in probability.}
	\]
\end{lemma}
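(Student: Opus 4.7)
The plan is to reduce the claim to Lemma~\ref{lem:log} applied at the fixed reference point $E$ plus a vanishing error. Setting
\[
	\Delta_N := \frac1N \sum_{i = 1}^N \Bigl[\log\Bigl|E + \tfrac{x}{N} - \lambda_i\Bigr| - \log|E - \lambda_i|\Bigr],
\]
Lemma~\ref{lem:log} already gives $\frac1N \sum_i \log|E - \lambda_i| \to \int \log|E - y| \rho_c(y) dy$ in probability, so it suffices to prove $\Delta_N \to 0$ in probability; I would aim for the stronger $L^1$-convergence by estimating $\Ex[|\Delta_N|]$ via the uniform density bound $\rho_N(y) \le D$ from Proposition~\ref{prop:Wegner}.

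Since $\rho_N$ is the first marginal,
\[
	\Ex[|\Delta_N|] \le \int \Bigl|\log|E + \tfrac{x}{N} - y| - \log|E - y|\Bigr| \rho_N(y) dy,
\]
and for each $\delta > 0$ I would split the integral at $|y - E| = \delta$.

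On the outer region $|y - E| > \delta$, for $N$ large enough that $|x|/N < \delta/2$, I rewrite the integrand as $|\log(1 + (x/N)/(E - y))|$ and apply the elementary inequality $|\log(1+u)| \le 2|u|$ for $|u| \le 1/2$ to bound it by $4|x|/(N\delta)$; since $\rho_N$ is a probability density, this region contributes at most $O(|x|/(N\delta))$, which vanishes as $N \to \infty$. On the inner region $|y - E| \le \delta$, I would use $\rho_N \le D$ to bound the contribution by $D$ times $\int_{|y - E| \le \delta} (|\log|E + x/N - y|| + |\log|E - y||) dy$; a change of variables shows each piece is $O(\delta|\log\delta|)$ uniformly in $N$ (for $N$ large), so the contribution vanishes as $\delta \to 0$.

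Letting $N \to \infty$ first and then $\delta \to 0$ yields $\Ex[|\Delta_N|] \to 0$. The only point requiring care is the logarithmic singularity at the \emph{moving} pole $E + x/N$ sitting arbitrarily close to the reference pole $E$; this is absorbed by combining the uniform $L^\infty$ bound on $\rho_N$ with the local integrability of $\log$, so I do not anticipate a serious obstacle beyond the bookkeeping above.
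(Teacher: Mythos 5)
Your proposal is correct, but it takes a genuinely different route from the paper. The paper works with the fractional moment $S_N = \Ex[|\Delta_N|^\delta]$ for $\delta \in (1/2,1)$: it applies the elementary inequality $\bigl|\log|u| - \log|v|\bigr| \le |u-v|\bigl(\frac{1}{|u|} + \frac{1}{|v|}\bigr)$, then subadditivity of $t \mapsto t^\delta$, producing a bound $\frac{|x|^\delta}{N^{2\delta-1}}\bigl(\int |E-y|^{-\delta}\rho_N(y)\,dy + \int |E+\tfrac{x}{N}-y|^{-\delta}\rho_N(y)\,dy\bigr)$. The choice $\delta < 1$ makes $|E-y|^{-\delta}$ locally integrable so that (with $\rho_N \le D$) both integrals are uniformly bounded, while $\delta > 1/2$ makes $N^{2\delta-1} \to \infty$; this is a single clean estimate with no secondary parameter. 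You instead bound $\Ex[|\Delta_N|]$ directly in $L^1$ via a near--far split at scale $\delta$: the far region contributes $O(|x|/(N\delta))$ from $|\log(1+u)| \le 2|u|$ together with $\int \rho_N = 1$, while the near region contributes $O(\delta|\log\delta|)$ uniformly in (large) $N$ from $\rho_N \le D$ and local integrability of $\log$, and a double limit $N \to \infty$ then $\delta \to 0$ concludes. Both arguments hinge on the same key input, the uniform density bound of Proposition~\ref{prop:Wegner}; yours is arguably more elementary (no fractional powers) and even yields the stronger $L^1$ convergence, at the price of a two-parameter limiting argument and the slightly fussier bookkeeping around the moving singularity at $E + x/N$, which you correctly identify and handle via a change of variables enlarging the window by $|x|/N$.
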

\begin{proof}
	With Lemma~\ref{lem:log} in mind, it suffices  to show that for $1/2 < \delta < 1$, 
	\[
		S_N : = \Ex\bigg[ \bigg| \frac{1}{N} \sum_{i = 1}^N \log|E - \lambda_i | - \log \Big|E + \frac{x}{N} - \lambda_i \Big| \bigg|^\delta \bigg] \to 0 \quad \text{as}\quad N \to \infty.
	\]
It follows from the following inequality 
\[
	|\log|u| - \log|v|| \le |u  -  v|\left( \frac{1}{|u|} + \frac{1}{|v|}\right),
\]
that 
\begin{align*}
	&\bigg| \frac{1}{N} \sum_{i = 1}^N \log|E - \lambda_i | - \log \Big|E + \frac{x}{N} - \lambda_i \Big| \bigg|^\delta \\
	&\le
	\frac{1}{N^\delta} \sum_{i = 1}^N \bigg| \log|E - \lambda_i | - \log \Big|E + \frac{x}{N} - \lambda_i \Big| \bigg|^\delta \\
	&\le \frac{x^\delta}{N^{2\delta}} \sum_{i = 1}^N \left( \frac{1}{|E - \lambda_i|^\delta} + \frac{1}{|E + \frac xN - \lambda_i|^\delta}\right).
\end{align*}
Thus, 
\[
	S_N \le \frac{x^\delta}{N^{2\delta - 1}} \left( \int \frac{1}{|E - y|^\delta} \rho_N(y) dy + \int \frac{1}{|E + \frac xN - y|^\delta} \rho_N(y) dy\right).
\]
We can bound the first integral as follows
\begin{align*}
	\int \frac{1}{|E - y|^\delta} \rho_N(y) dy &= \int_{|E-y| \le 1} \frac{1}{|E - y|^\delta} \rho_N(y) dy +  \int_{|E-y| > 1} \frac{1}{|E - y|^\delta} \rho_N(y) dy \\
	&\le D\int_{|E-y| \le 1} \frac{1}{|E - y|^\delta}  dy +  \int_{|E-y| > 1} \rho_N(y) dy \\
	&\le \frac{2D}{1-\delta} + 1,
\end{align*}
which is bounded as $N \to \infty$. Here recall that $D$ is an upper bound for the density $\rho_N(x)$ in Proposition~\ref{prop:Wegner}. The same estimate holds for the second integral. Therefore $S_N \to 0$ as $N \to \infty$. The proof is complete.
\end{proof}

\begin{proof}[Proof of Theorem~{\rm\ref{thm:local}}]
As explained before, it remains to show the condition (A).
Analogous to Lemma~\ref{lem:log-E}, it holds that under $\Prob_{\beta, N -k}$, as $N \to \infty$ with $\beta N \to 2c$,
\[
	\frac{1}{N} \sum_{i = 1}^{N - k} \log \left |E + \frac{x}{N} - \lambda_i \right| \to \int \log|E - y| \rho_c(y) dy \quad \text{in probability,}
\]
and hence, for fixed $E$, and fixed $x_1, \dots, x_k$, 
\[
	\frac{1}{N} \sum_{j = 1}^k \sum_{i = 1}^{N - k}  \log \left |E + \frac{x_j}{N} - \lambda_i \right| \to  k\int \log|E - y| \rho_c(y) dy \quad \text{in probability.}
\]
Then by the continuous mapping theorem, and the uniform integrability following from Lemma~\ref{lem:k-bound}, it follows that 
\begin{align*}
	\Ex_{\beta, N-k} \bigg[ \prod_{j = 1}^k \prod_{i = 1}^{N - k}  \left |E + \frac{x_j}{N} - \lambda_i  \right|^\beta \bigg] &= \Ex_{\beta, N-k} \left[e^{ \beta  \sum_{j = 1}^k \sum_{i = 1}^{N - k}  \log  |E + \frac{x_j}{N} - \lambda_i |} \right] \\
	&\to e^{2ck\int \log|E - y| \rho_c(y) dy}.
\end{align*}
In the formula~\eqref{k-marginal}, with $\{x_i\}_{i=1}^k$ replaced by $\{E + \frac{x_i}N\}_{i = 1}^k$, we see that 
\begin{itemize}
	\item the ratio $Z_{\beta, N - k}/Z_{\beta, N}$ converges to $(Z_c)^{-k}$, because  
		\[
			\frac{Z_{\beta, N - k}}{Z_{\beta, N}} = \prod_{i = 1}^k \frac{Z_{\beta, N - i }}{Z_{\beta, N - i + 1}} \to \frac{1}{(Z_c)^k};
		\]
	\item since all $\{x_i\}$ are distinct, the Vandermonde determinant factor converges to $1$,  
	\[
		|\Delta(\{E + \frac{x_i}N\})|^\beta = \frac{|\Delta(x)|^\beta}{N^{\beta k(k-1)/2}} \to 1;
	\]
	\item and by the continuity of $V$,
	\[
		e^{-\sum_{i = 1}^k {V(E + \frac{x_i}N)}} \to e^{-k V(E)}.
	\]
		
\end{itemize}
Therefore, as $\beta N \to 2c$,
\[
	R_N^{(k)} (x_1, x_2, \dots, x_k) \to  \frac{e^{-kV(E) + 2ck\int \log|E - y| \rho_c(y) dy}}{(Z_c)^k}  = \rho_c(E)^k,
\]
which is nothing but the condition (A). Here we have used the equation~\eqref{rhoc} charactering $\rho_c$.
The proof is complete.
\end{proof}

We conclude this paper by the following remark.
\begin{remark}
\begin{itemize}
	\item[(i)] The arguments in this paper can be generalized to show the joint convergence of $(\xi_N(E), \xi_N(E'))$ to independent homogeneous Poisson point processes with densities $\rho_c(E)$ and $\rho_c(E')$, respectively. Here $E \neq E'$ are fixed reference energies.
	
	\item[(ii)] We can also prove the following result: for fixed $E_0 \in \R$, and fixed $0< \gamma < 1$, the point processes $(\xi_N(E_0 - N^{-\gamma}), \xi_N(E_0 + N^{-\gamma} ))$ converge weakly to independent Poisson point processes with the same density $\rho_c(E_0)$.
	
\end{itemize}
\end{remark}

\bigskip
\noindent{\bf Acknowledgment.}
This work is supported by JSPS KAKENHI Grant Number JP19K14547 (K.D.T.). The authors would like to thank the referees for many helpful suggestions.

\begin{footnotesize}

\end{footnotesize}
%

\end{document}